\documentclass[12pt,oneside,reqno]{amsart}

\usepackage[margin=0.9in]{geometry}
\usepackage{amsmath,amssymb,amsthm,mathtools}
\usepackage{enumitem}
\usepackage{microtype}
\usepackage[colorlinks=true,linkcolor=blue,citecolor=blue,urlcolor=blue]{hyperref}
\usepackage[toc,title,page]{appendix}
\usepackage[skip=5pt,indent=0pt]{parskip}

\numberwithin{equation}{section}

\newtheorem{theorem}{Theorem}[section]
\newtheorem*{theorem*}{Theorem}
\newtheorem{proposition}[theorem]{Proposition}
\newtheorem{lemma}[theorem]{Lemma}

\newtheorem{question}[theorem]{Question}

\theoremstyle{remark}
\newtheorem{remark}[theorem]{Remark}

\newenvironment{manualtheorem}[1]{%
  \renewcommand\thetheorem{#1}
  \theorem
}{\endtheorem}

\newcommand{\R}{\mathbb R}

\newcommand{\eps}{\varepsilon}
\newcommand{\dd}{\,d}
\newcommand{\ip}[2]{\langle #1,#2\rangle}

\newcommand{\vol}{\operatorname{vol}}

\newcommand{\conv}{\operatorname{conv}}

\def\s{\sigma}
\def\class{\mathcal{K}^n_0}

\title[Uncentered Blaschke-Santal\'o inequalities for the Gaussian measure]{Uncentered Blaschke-Santal\'o inequalities for the Gaussian measure}

\author[S. Artstein-Avidan]{S. Artstein-Avidan}
\author[M. Fradelizi]{M. Fradelizi}
\author[K. Wyczesany]{K. Wyczesany}

\date{}

\begin{document}

\begin{abstract}
We study the maximizers of the generalized volume product
\[
        \gamma_\sigma^n(A)\,\gamma_\sigma^n(A^\circ)
\]
among all measurable subsets $A\subset\R^n$, where 
$A^\circ$ denotes the polar set of $A$, and where $\gamma_\sigma^n$ denotes the centered Gaussian probability measure on $\R^n$ with covariance $\sigma^2 I_n$, $\sigma>0$.  It turns out that the maximizers depend on $\sigma$.
We prove that they exist and are convex bodies.
In dimension $n=1$, we find the exact form of the maximizers.  
In dimension $n\ge 2$, we show that they are smooth bodies of revolution whose support function satisfies a certain differential equation. 
Moreover, for $\sigma^2 \le \frac{1}{{n}}$ we show that the Euclidean unit ball is the unique maximizer, while this is no longer the case for $\sigma^2> {\frac{2}{n+1}}$. In dimension $n=2$, we close the gap by showing that the Euclidean unit ball is the unique maximizer for $\sigma^2 \le \frac{2}{3}$.

\end{abstract}

\maketitle

\section{Introduction}

Let $\gamma_\sigma^n$ be the centered Gaussian probability measure on $\R^n$ with variance $\sigma^2>0$, namely
\[
        d\gamma_\sigma^n(x)=\frac{1}{(2\pi\sigma^2)^{n/2}}
        \exp\left(-\frac{|x|^2}{2\sigma^2}\right)\dd x.
\]
Given any set $A\subset\R^n$, its polar is defined as
\[
        A^\circ=\{y\in\R^n:\ip{x}{y}\le1\text{ for all }x\in A\}.
\]
The polar set is always closed, convex, and contains the origin. For a  measurable set $A$ we consider the Gaussian volume product
\begin{equation}\label{eq:Gaussian product}
G^n_\sigma(A)=\gamma_\sigma^n(A)\,\gamma_\sigma^n(A^\circ)
\end{equation}
and ask the following question.

\begin{question}\label{q:main}
For fixed $\sigma>0$, which sets $A\subset\R^n$ maximize $G^n_\sigma(A)$?
\end{question}

The problem naturally reduces to the case when $A$ is already a closed convex set containing the origin, since for every set $A\subset\R^n$,
\[
        A^{\circ\circ}=\overline{\conv(A\cup\{0\})},
        \qquad (A^{\circ\circ})^\circ=A^\circ,
\]
and $A\subset A^{\circ\circ}$. Replacing $A$ by $A^{\circ\circ}$ can therefore only increase the first Gaussian factor and leaves the second unchanged. Thus it is enough to work on the following class of measurable sets
\[
        \class=\{K\subset\R^n:K\text{ is closed, convex, and }0\in K\}.
\]

A natural point of comparison is the classical Blaschke--Santal\'o inequality \cite{Blaschke, Santalo}. It asserts that for every convex body $K\subset\R^n$ there exists a point $z\in\operatorname{int}K$, the Santal\'o point of $K$, such that
\begin{equation*}
    \vol(K)\vol((K-z)^\circ)\le \vol(B_2^n)^2,
\end{equation*}
 where $B_2^n$ denotes the Euclidean unit ball. In the centrally symmetric case the Santal\'o point is the origin, and the inequality becomes
\[
        \vol(K)\vol(K^\circ)\le \vol(B_2^n)^2.
\]
The Blaschke--Santal\'o inequality has a long history and admits many formulations and proofs, reflecting its connections with affine isoperimetric inequalities, harmonic analysis, and transport-entropy theory. We refer to \cite{Schneider, AAGM,FMZ} for general background on the volume product, to \cite{Blaschke,Santalo,SaintRaymond,Ball-thesis,MeyerPajorBlaschke} for some of its proofs, and to \cite{BallLogConcave,AAKM,FM,BianchiKelly,FathiTalagrand,FGSZ} for functional variants. For completeness, let us mention that the lower bound for the volume product is much subtler. The symmetric Mahler conjecture, posed by Mahler \cite{Mahler1939}, predicts that the minimum among centrally symmetric convex bodies is attained by the cube; it is known in dimensions $2$ and $3$ \cite{Mahler1939,IriyehShibata}. The non-symmetric three-dimensional Mahler conjecture was recently proved in \cite{ChenLiXiXuMahler} and is open in higher dimensions.

Question \ref{q:main} is a Gaussian counterpart of the Santal\'o volume-product problem, but with a different flavour as we do not assume that the set is centered, nor translate the set by its Santal\'o point. The origin is fixed before polarity is taken, and we consider all the sets in the image of the polarity transform.

The Gaussian volume product was already considered by Cordero-Erausquin, who proved that among centrally symmetric convex bodies the Euclidean unit ball is optimal.

\begin{manualtheorem}{A}[Cordero-Erausquin \cite{CorderoSantalo}]\label{thm:symmetric}
Let $K\subset\R^n$ be a centrally symmetric convex body. Then, for all $\sigma>0$,
\[
        \gamma_\sigma^n(K)\,\gamma_\sigma^n(K^\circ)
        \le \gamma_\sigma^n(B_2^n)^2.
\]
\end{manualtheorem}

Cordero-Erausquin asked whether, more generally, for an even log-concave measure $\mu$ and a symmetric convex body $K$ one has
\[
        \mu(K)\,\mu(K^\circ)\le \mu(B_2^n)^2.
\]
This conjecture in full generality remains open, and is connected to the $B$-conjecture. It has been proved to be true for unconditional log-concave measures, see \cite{FMZ,MilmanYehudayoff}. \vspace*{2mm}

When central symmetry is dropped, a phase transition phenomenon in $\sigma$ appears. Intuitively, when $\sigma$ is small, most of the mass is near the origin and so the maximizer is allowed to have a small Lebesgue measure, but as $\sigma$ becomes large, the maximizer needs to capture more Lebesgue measure. More precisely, as $\sigma\to \infty$, clearly $\gamma_\sigma^n(B_2^n)=\gamma_1^n(B_2^n/\sigma)\to0$, while, for example, the orthant $K=\R_+^n$ satisfies $\gamma_\sigma^n(K)=2^{-n}=\gamma_\sigma^n(K^\circ)$ independently of $\sigma$. Thus a phase transition must occur. 

First, we show a complete characterization of the maximizer in dimension one.

\begin{theorem}\label{thm:one-dimensional}
Let $\sigma>0$. Among all measurable $K\subset\R$, the Gaussian volume product $G_\sigma^1(K)$
is maximized as follows. 
If $\sigma\le1$, the unique maximizer is the symmetric interval $[-1,1]$.
If $\sigma>1$, the maximizers are, up to reflection,
\[
        K_\sigma=[-a_\sigma^{-1},a_\sigma],
\]
where $a_\sigma>1$ is the unique solution of
\begin{equation}\label{eq:asigma}
        a_\sigma^2\exp\left(-\frac{a_\sigma^2-a_\sigma^{-2}}{2\sigma^2}\right)=1.
\end{equation}
\end{theorem}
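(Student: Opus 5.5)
We first reduce to intervals and then turn the two-variable problem into a one-variable one with a single AM--GM step. By the reduction in the introduction, it suffices to consider $K\in\mathcal K^1_0$. Such a $K$ is an interval $[-b,a]$ with $a,b\in[0,\infty]$. Its polar is $[-1/b,1/a]$, with the conventions $1/0=\infty$ and $1/\infty=0$. Put $F(t)=\gamma^1_\sigma([0,t])$, so that $F(0)=0$ and $F(\infty)=1/2$. Then
\[
G^1_\sigma([-b,a])=\bigl(F(a)+F(b)\bigr)\bigl(F(1/a)+F(1/b)\bigr).
\]
Define $\psi(t)=F(t)+F(1/t)=\gamma^1_\sigma([-1/t,t])$. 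By AM--GM,
\[
G^1_\sigma([-b,a])\le\Bigl(\tfrac{F(a)+F(b)+F(1/a)+F(1/b)}{2}\Bigr)^2=\Bigl(\tfrac{\psi(a)+\psi(b)}{2}\Bigr)^2\le\bigl(\sup\psi\bigr)^2 .
\]
If $a^*$ maximizes $\psi$, the interval $[-1/a^*,a^*]$ gives equality, because its polar is itself. So the problem reduces to maximizing $\psi$ over $[0,\infty]$. Since $\psi(t)=\psi(1/t)$, we may restrict to $t\ge1$.

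Differentiating gives
\[
\psi'(t)=c_\sigma\bigl(e^{-t^2/2\sigma^2}-t^{-2}e^{-1/(2\sigma^2t^2)}\bigr),
\]
so for $t\ge1$ the sign of $\psi'(t)$ is the sign of
\[
k(t)=-2\ln t-\frac{t^2-t^{-2}}{2\sigma^2}
\]
multiplied appropriately. Precisely, $\psi'(t)>0$ iff $t^2e^{-(t^2-t^{-2})/2\sigma^2}>1$. Substitute $t=e^s$ with $s\ge0$; the condition becomes $m(s):=2s-\sinh(2s)/\sigma^2>0$. The function $m$ is strictly concave on $[0,\infty)$, satisfies $m(0)=0$, $m'(0)=2-2/\sigma^2$, and $m\to-\infty$.

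If $\sigma\le1$, then $m<0$ for all $s>0$. Hence $\psi$ is strictly decreasing on $[1,\infty]$, and its unique maximum on $[1,\infty]$ is at $t=1$. If $\sigma>1$, then $m$ has exactly one positive zero $s_\sigma$, with $m>0$ on $(0,s_\sigma)$ and $m<0$ afterwards. Hence $\psi$ increases and then decreases, with its unique maximum on $[1,\infty]$ at $a_\sigma=e^{s_\sigma}$. This $a_\sigma$ is exactly the unique solution $a_\sigma>1$ of \eqref{eq:asigma}. The endpoint $t=\infty$ gives $\psi(\infty)=1/2$, which is a strict limit of a decreasing function and so is never the maximum. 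Thus the maximizers of $\psi$ on $[0,\infty]$ are $\{1\}$ when $\sigma\le1$, and $\{a_\sigma,a_\sigma^{-1}\}$ when $\sigma>1$.

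The main care point is uniqueness. Equality in the chain above forces $\psi(a)=\psi(b)=\max\psi$, and also equality in AM--GM, that is, $F(a)+F(b)=F(1/a)+F(1/b)$. When $\sigma\le1$ this gives $a=b=1$. When $\sigma>1$ we have $a,b\in\{a_\sigma,a_\sigma^{-1}\}$. The choice $a=b$ would violate the AM--GM equality, since $F(a_\sigma)\neq F(a_\sigma^{-1})$. Hence $b=1/a$, which gives $K_\sigma$ or its reflection. Finally, for a general measurable $A$, maximality forces $\gamma^1_\sigma(A)=\gamma^1_\sigma(A^{\circ\circ})$. So $A$ coincides with the convex maximizer up to a null set, and uniqueness is to be read in that sense (or among closed convex sets).
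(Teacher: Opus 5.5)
Your proof is correct, and it takes a genuinely different and much shorter route than the paper.

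The paper treats $G(a,b)$ as a two-variable function. It compactifies the domain to get existence, rules out the face $a=0$ by a perturbation, and settles the symmetric face $a=b$ by appealing to Cordero-Erausquin's Theorem~A. It then classifies interior critical points through the system $f_\sigma(a^2)=f_\sigma(b^2)=h(a,b)$. This needs a case analysis on the common value $c$, including the (B)-theorem to exclude $(a^2,b^2)=(y_2,y_3)$. Only at the very end does it compare the two surviving candidates through $I(a)=\int_{-1/a}^{a}e^{-x^2/(2\sigma^2)}\,dx$, which is your $\psi$ up to normalization.

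Your regrouping $\gamma^1_\sigma(K)+\gamma^1_\sigma(K^\circ)=\psi(a)+\psi(b)$, followed by AM--GM, short-circuits all of this. It gives the sharp bound $(\max\psi)^2$ directly, and an explicit interval attains it. So you need no existence argument, no boundary analysis, and neither Theorem~A nor the (B)-theorem. Uniqueness follows straight from the two equality conditions. Your concavity analysis of $m(s)=2s-\sinh(2s)/\sigma^2$ is also a tidier way to locate the extrema of $\psi$ than tracking the monotonicity intervals of $f_\sigma$.

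What the paper's route buys is mainly methodological continuity with the Euler--Lagrange analysis it uses for $n\ge2$. Your regrouping of the four half-line masses is intrinsically one-dimensional and does not carry over to higher dimensions.

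Your closing remark is a point the paper passes over: uniqueness among measurable sets holds only up to null sets. For example, $[-1,1]\setminus\{1/2\}$ is also a maximizer for $\sigma\le1$.

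There are two harmless slips:
\begin{itemize}
\item The polar of $[-1/a^*,a^*]$ is its reflection $[-a^*,1/a^*]$, not itself. Equality still holds because $\gamma^1_\sigma$ is even.
\item $k(t)$ should be $2\ln t-\frac{t^2-t^{-2}}{2\sigma^2}$, as your ``Precisely'' sentence correctly states.
\end{itemize}
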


In particular we see that for $\sigma\le1$, the symmetric interval remains optimal. For $\sigma>1$, however, the maximizer becomes asymmetric. 

Our second result shows that, although maximizers need not be centrally symmetric, they retain a strong form of symmetry. 

\begin{theorem}\label{thm:revolution}
 Let $n\ge2$, and let $K\subseteq \R^n$ be a maximizer of $G^n_\sigma$. Then $K$ is a body of revolution. 
\end{theorem}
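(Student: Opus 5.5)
We will show that every maximizer is symmetric with respect to every hyperplane containing a certain fixed line through the origin. The tool is Steiner-type symmetrization, in a form adapted to the Gaussian measure and compatible with polarity. Since $\gamma_\sigma^n$ is rotation invariant, the natural operation is reflection symmetrization with respect to a hyperplane $H=u^\perp$ through the origin. Write $R_u$ for the reflection across $H$. We use the known fact (Meyer--Pajor, Saint-Raymond) that Steiner symmetrization $S_u$ satisfies $(S_uK)^\circ\supseteq S_u(K^\circ)$ in the appropriate sense. Concretely, for $K\in\class$ one has
\[
\mathrm{vol}_1\bigl(\pi_y (S_u K)^\circ\bigr)\ge \mathrm{vol}_1\bigl(\pi_y K^\circ\bigr)
\]
on each line parallel to $u$. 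The inequality $\gamma(S_uK)\ge\gamma(K)$ is Anderson-type: the one-dimensional Gaussian on each fiber is symmetric and decreasing, so centering each fiber does not decrease its measure. The plan is to show that Steiner symmetrization in direction $u$ does not decrease $G^n_\sigma$. Equality analysis then forces each fiber of $K$ in direction $u$ to be centered on $H$, for \emph{every} direction $u$ lying in a suitable family.

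That family cannot be all directions: the maximizers for $\sigma$ large are not centrally symmetric, since by Theorem \ref{thm:one-dimensional} the product of a one-dimensional cross-section already prefers asymmetry. So the equality analysis cannot be run naively. The fix is to use two-point (polarization) symmetrization, which preserves $\gamma_\sigma^n$ exactly. Given a hyperplane $H$ through the origin with a chosen side $H^+$, polarization $P_H$ swaps $x$ and $R_Hx$ whenever $x\in H^-$, $R_Hx\in H^+$, $x\in K$, $R_Hx\notin K$. Then $\gamma(P_HK)=\gamma(K)$. One checks, as in the literature on polar sets, that $(P_HK)^\circ\supseteq P_H(K^\circ)$ up to replacing $K$ by the closed convex hull, which only helps. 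This gives $G(P_HK)\ge G(K)$, and we may replace a maximizer by $(P_HK)^{\circ\circ}$.

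Choose the axis $e$ to be a direction maximizing the radial function of $K$ (or the direction of the barycenter of $K\cap$ a large ball). For every hyperplane $H\ni e$, polarize with respect to $H$. The key step is to show that if $K$ is not $R_H$-symmetric, then $(P_HK)^{\circ\circ}$ or its polar strictly gains Gaussian measure. Non-symmetric means that $P_HK\ne K$ and $P_HK\ne R_HK$. In that case $P_HK$ is not convex, and its convex hull adds a set of positive measure. This would contradict maximality, so $K$ is symmetric under $R_H$ for every $H$ containing the line $\R e$. Hence $K$ is invariant under the group these reflections generate, namely $O(n-1)$ acting on $e^\perp$, and $K$ is a body of revolution.

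The main obstacle is this strict-gain step. We need to verify that if $K$ is convex and $P_HK$ is neither $K$ nor $R_HK$, then $P_HK$ is non-convex on a set of positive measure, so that $\conv(P_HK)\setminus P_HK$ has positive Gaussian measure. We also need to ensure that the same axis $e$ works for all $H$ simultaneously. The first attempt will be the standard argument: pick $x\in K\cap H^-$ with $R_Hx\notin K$, and $y\in K\cap H^+$ with $R_Hy\notin K$ (these exist by the non-triviality assumption). Then show that a segment between $x$ and $R_Hy$ leaves $P_HK$ on an open set. Finally, fix the axis by insisting that $e$ lies in $P_H K$'s favored side, for instance by choosing $e$ to be a point of $K$ of maximal norm, so that every admissible $H$ containing $e$ is compatible.
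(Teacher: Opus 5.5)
Your proposal has a genuine gap at its central step. You assert that if $K$ is not $R_H$-symmetric then $P_HK\ne K$ and $P_HK\ne R_HK$. But $P_HK=K$ holds exactly when $R_H(K\cap H^-)\subseteq K$, and this is compatible with $K\ne R_HK$. For example, take $K=B_2^2+\eps e_1$ with $0<\eps<1$, $H=e_1^\perp$ and $H^+=\{x_1>0\}$.

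In that situation polarization changes nothing, so maximality gives no information. At best polarization shows that $K$ and $R_HK$ are \emph{ordered} on each side of $H$, never that they coincide. Dimension one already shows this. There $P^+[-a,b]=[-\min(a,b),\max(a,b)]$ and $(P^+K)^\circ=P^-(K^\circ)$ exactly; the favored side flips, and with the same side your inclusion already fails for $K=[-1,2]$. So polarization leaves $G^1_\sigma$ unchanged and cannot see the asymmetric maximizers of Theorem~\ref{thm:one-dimensional}.

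Even the ordering statement is not delivered by your convex-hull argument, because $P_HK\notin\{K,R_HK\}$ does not make $P_HK$ non-convex. Take the hexagon whose section at height $x_2=c$ is $[-\min(1,1+c),\min(1,1-c)]$ for $|c|\le\frac12$, and polarize it across $\{x_1=0\}$. The result is the convex pentagon with sections $[|c|-1,1]$, which differs from both $K$ and $R_HK$. Any strict gain would therefore have to come from the polar side, which you do not address.

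Finally, the axis is not well defined. Every $H$ you use contains $e$, so ``$e$ lies in the favored side'' is meaningless. Passing from side-wise ordering to rotational symmetry would need ordering for \emph{all} hyperplanes through $0$, plus a separate rigidity lemma of foliated-Schwarz type that produces the axis.

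The missing idea lies in the Steiner route you dropped, and it answers exactly your objection that the admissible family ``cannot be all directions''. Note also that your fiberwise inequality is false as stated. For a thin cylinder around the segment $[-\delta u,u]$, $K^\circ\cap\R u$ has length $1+1/\delta$, while $(S_uK)^\circ\cap\R u$ has length $4/(1+\delta)$.

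The paper symmetrizes only with respect to hyperplanes $H$ that are \emph{medial} for $K^\circ$, meaning $\gamma_\sigma^n(K^\circ\cap H^+)=\gamma_\sigma^n(K^\circ\cap H^-)$. For such $H$ there is the Meyer--Pajor inclusion $\frac{t}{s+t}K^\circ(s)+\frac{s}{s+t}K^\circ(-t)\subset(S_HK)^\circ\bigl(\frac{2st}{s+t}\bigr)$ between sections parallel to $H$. Together with log-concavity of $\gamma_\sigma^{n-1}$ and Ball's harmonic-mean lemma, it gives $\gamma_\sigma^n((S_HK)^\circ)\ge\gamma_\sigma^n(K^\circ)$. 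Meanwhile $\gamma_\sigma^n(S_HK)\ge\gamma_\sigma^n(K)$, with equality only if $S_HK=K$.

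The argument then runs as follows.
\begin{enumerate}
\item Using the intermediate value theorem on the pencil of hyperplanes through a codimension-two subspace, pick orthonormal $u_1,\dots,u_{n-1}$ such that each $u_{k+1}^\perp$ is medial for the current polar.
\item For a maximizer every step is an equality, so $K$ is symmetric in each $u_i^\perp$.
\item Consequently the sections of $K^\circ$ orthogonal to the remaining direction $e_n$ are centrally symmetric, so \emph{every} hyperplane containing $\R e_n$ is automatically medial for $K^\circ$.
\item The same equality argument then makes $K$ symmetric with respect to all of these hyperplanes.
\end{enumerate}
The axis is thus produced by the medial condition rather than chosen in advance. The hyperplane $e_n^\perp$, across which asymmetry genuinely occurs, is never used.
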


This theorem reduces the higher-dimensional non-symmetric problem to a one-dimensional variational problem. Its proof uses Steiner symmetrization with respect to suitably chosen medial hyperplanes, an approach adapted from Meyer--Pajor \cite{MeyerPajorSantalo} going back to Ball \cite{Ball-thesis}.

Using a Wulff-shape differentiation argument, sometimes called Aleksandrov's lemma, we show that maximizers must be smooth. In doing so, we derive an Euler--Lagrange equation. 
While the Euclidean ball is always a critical point, namely it satisfies the Euler--Lagrange equation, it is not always a maximizer.

\begin{theorem}\label{thm:not-ball}
Let $n\ge2$ and $\sigma^2>{\frac{2}{n+1}}$, then $B_2^n$ is not a maximizer of $G^n_\sigma$.
\end{theorem}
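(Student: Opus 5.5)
The plan is to exhibit an explicit one-parameter family of admissible sets through $B_2^n$ along which $G^n_\sigma$ strictly increases to second order when $\sigma^2>\frac{2}{n+1}$. The natural choice is translation, $K_t=B_2^n+te_1$ for small $|t|$. These sets lie in $\class$ for $|t|<1$, and for them $t\mapsto G^n_\sigma(K_t)$ is even. So the first derivative at $t=0$ vanishes, consistent with the ball being a critical point. It then suffices to show that $\frac{d^2}{dt^2}\big|_{t=0}G^n_\sigma(K_t)>0$ exactly when $\sigma^2>\frac{2}{n+1}$.

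\textbf{Step 1 (the body).} Write $\varphi(x)=(2\pi\sigma^2)^{-n/2}e^{-|x|^2/(2\sigma^2)}$. Then
\[
\gamma_\sigma^n(K_t)=\int_{B_2^n}\varphi(x+te_1)\dd x .
\]
Differentiating twice at $t=0$ and using the divergence theorem gives
\[
\int_{B_2^n}\partial_1^2\varphi=\int_{S^{n-1}}\partial_1\varphi(\theta)\,\theta_1\dd\theta=-\frac{\varphi(1)}{\sigma^2}\int_{S^{n-1}}\theta_1^2\dd\theta ,
\]
where $\varphi(1)$ denotes the common value of $\varphi$ on the sphere.

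\textbf{Step 2 (the polar).} The support function of $K_t$ is $h_{K_t}(y)=|y|+ty_1$. Hence $K_t^\circ=\{y:|y|+ty_1\le1\}$, and its radial function is $\rho_t(\theta)=1/(1+t\theta_1)=1-t\theta_1+t^2\theta_1^2+O(t^3)$. In polar coordinates,
\[
\gamma_\sigma^n(K_t^\circ)=\int_{S^{n-1}}F(\rho_t(\theta))\dd\theta,\qquad F(\rho)=\int_0^\rho r^{n-1}\varphi(r)\dd r .
\]
The derivatives are $F'(1)=\varphi(1)$ and $F''(1)=(n-1-\sigma^{-2})\varphi(1)$. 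Expanding to second order, the linear term integrates to zero by oddness in $\theta_1$. This leaves
\[
\frac{d^2}{dt^2}\Big|_{t=0}\gamma_\sigma^n(K_t^\circ)=\varphi(1)\Big(n+1-\frac1{\sigma^2}\Big)\int_{S^{n-1}}\theta_1^2\dd\theta .
\]

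\textbf{Step 3 (combine).} Both first derivatives vanish, so the product rule gives
\[
\frac{d^2}{dt^2}\Big|_{t=0}G^n_\sigma(K_t)=\gamma_\sigma^n(B_2^n)\,\varphi(1)\Big(n+1-\frac{2}{\sigma^2}\Big)\int_{S^{n-1}}\theta_1^2\dd\theta .
\]
This is strictly positive precisely when $\sigma^2>\frac{2}{n+1}$. For such $\sigma$, $G^n_\sigma(K_t)>G^n_\sigma(B_2^n)$ for all small $t\ne0$, so the ball is not a maximizer.

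The main obstacle is bookkeeping rather than ideas. One must get the second-order expansion of the polar's radial function and of $F$ right, since the extra $2\theta_1^2$ term coming from $\rho_t$ is exactly what produces $n+1$ rather than $n-1$. One must also justify differentiation under the integral sign, which is routine since everything is smooth for $|t|<1$. If the translation computation were off by a constant, the fallback would be a general perturbation $h=1+tu$ with $u$ a degree-one spherical harmonic, handled via the Wulff-shape/Aleksandrov variation mentioned before the statement. Translations are the natural candidate for the first unstable direction, since they are the one non-symmetric mode.
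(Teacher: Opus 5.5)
Your proposal is correct and follows essentially the same route as the paper: translate the ball along a fixed direction, expand $\gamma_\sigma^n(B_2^n+te)$ to second order via the divergence theorem, and expand the polar through its radial function $1/(1+t\langle\theta,e\rangle)$ in polar coordinates. Both arguments reach the same second-order coefficient, a positive multiple of $\bigl(n+1-\frac{2}{\sigma^2}\bigr)$, which is positive exactly when $\sigma^2>\frac{2}{n+1}$.
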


On the other hand, for $\sigma$ sufficiently small, the unit Euclidean ball is indeed the unique maximizer. First, in dimension $2$, we show that $\sigma^2=\frac 2{3}$ is the exact threshold. 

\begin{theorem}\label{thm:small-sigma2}
Let $n=2$ and  $\sigma^2\le{ \frac23}$,
then $K=B_2^2$ is the unique maximizer of $G^2_\sigma$.  
\end{theorem}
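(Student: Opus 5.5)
We use Theorem \ref{thm:revolution}, together with the existence, convexity and smoothness of maximizers described above, to reduce to a one-dimensional variational problem in dimension $2$, and then show that the ball beats every competitor when $\sigma^2\le\frac23$. Concretely, let $K$ be a maximizer of $G^2_\sigma$. By Theorem \ref{thm:revolution}, $K$ is symmetric about a line through the origin, say the $x_1$-axis. If $K$ is unbounded or has $0$ on its boundary, then one of $K,K^\circ$ is degenerate in the relevant sense, and we expect a direct comparison with $B_2^2$ to exclude this for $\sigma^2\le\frac23$. Hence we may assume $0\in\operatorname{int}K$ and write both bodies in polar coordinates. Let $\rho=\rho_K$ be the radial function of $K$ and $h=h_K$ its support function, and set $\Phi(r)=\frac{1}{2\pi\sigma^2}\int_0^r te^{-t^2/2\sigma^2}\,dt=\frac{1}{2\pi}\bigl(1-e^{-r^2/2\sigma^2}\bigr)$. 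Since $\rho_{K^\circ}=1/h$, we get
\[
G^2_\sigma(K)=\Bigl(\int_0^{2\pi}\Phi(\rho(\theta))\,d\theta\Bigr)\Bigl(\int_0^{2\pi}\Phi(1/h(\theta))\,d\theta\Bigr).
\]

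The core inequality should come from splitting each factor via the reflection $\theta\mapsto\theta+\pi$, and mirroring the one-dimensional Theorem \ref{thm:one-dimensional} along every line through the origin. For each direction $u$, the chord of $K$ in directions $\pm u$ has lengths $\rho(u),\rho(-u)$, while $K^\circ$ has lengths $1/h(u),1/h(-u)$, and $h(u)\ge\rho(u)$. The one-dimensional problem with $\sigma\le1$ says $\Phi_1(a)+\Phi_1(b)$ times its polar counterpart is at most the value at $a=b=1$. That fact alone does not suffice, because the two factors in $G^2_\sigma$ mix directions, and the planar threshold $\frac23<1$ shows that the mixing matters. So we will use the duality relation $h(u)=\sup_v \rho(v)\langle u,v\rangle$ quantitatively. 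The plan is to apply AM-GM/Cauchy-Schwarz in the form
\[
G^2_\sigma(K)\le \Bigl(\tfrac12\int_0^{2\pi}[\Phi(\rho)+\Phi(1/h)]\,d\theta\Bigr)^2,
\]
after normalizing $K\mapsto tK$ so that the two factors are equal. We then aim for the pointwise-in-pairs bound
\[
\Phi(\rho(\theta))+\Phi(1/h(\theta))+\Phi(\rho(\theta+\pi))+\Phi(1/h(\theta+\pi))\le 4\Phi(1).
\]
This bound is false in general, since $1/h$ can be close to $1/\rho$. We therefore replace it by an integrated version that exploits $h$ being a support function, i.e.\ $h''+h\ge0$ with $\rho\le h$.

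The more promising route is to work directly with the smooth Euler--Lagrange equation satisfied by $h$ and argue by second variation plus uniqueness. First, we linearize $G^2_\sigma$ at $B_2^2$ under perturbations $h=1+\eps\phi$, expanding $\phi$ in Fourier modes $\cos k\theta$. We compute that the second variation is negative for all modes $k\ge2$ for every $\sigma$. For the mode $k=1$, which is a translation, the quadratic form is proportional to $(n+1)\sigma^2-2$ with $n=2$, i.e.\ $3\sigma^2-2$; this is consistent with Theorem \ref{thm:not-ball}. Thus $B_2^2$ is a strict local maximizer exactly when $\sigma^2<\frac23$. Second, we show that for $\sigma^2\le\frac23$ the ODE for the profile of a body of revolution in the plane has $h\equiv1$ as its only admissible periodic solution. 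We would attempt this with an energy/Pohozaev-type identity: multiply the Euler--Lagrange equation by $h'$ or by $\cos\theta$ and integrate, then show the resulting quantity has a sign forcing $h$ constant. The constant is pinned to $1$ by the scaling identity coming from $t\mapsto G(tK)$ at $t=1$. The endpoint $\sigma^2=\frac23$ would then follow from the same identity, or by continuity together with a degenerate analysis of the first mode.

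The main obstacle is this global uniqueness step, since local analysis only gives the threshold. The paper's $n$-dimensional sufficient condition $\sigma^2\le\frac1n$ suggests a cruder global argument exists, and $n=2$ needs a sharper, dimension-specific identity. In the plane we would first try to exploit that the Euler--Lagrange equation is a second-order ODE in $\theta$, namely $h''+h=$ a function of $h$ and $h'$ through $\rho$. The first integral of this ODE, together with the requirement that a non-constant solution be $2\pi$-periodic with a single maximum, should contradict $\sigma^2\le\frac23$ via a Sturm comparison with the linearized equation $\phi''+\phi=c(\sigma)\phi$, whose period condition is exactly $3\sigma^2=2$.
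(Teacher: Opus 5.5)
\textbf{Main gap: the global uniqueness step is missing.} Your second-variation computation is correct. Along $h=1+\eps\cos k\theta$ with $k\ge1$, the quadratic term of $G^2_\sigma$ is a positive multiple of $4-k^2-2/\sigma^2$. But this only shows that $B_2^2$ is a local maximizer for $\sigma^2<\frac23$. The theorem needs that no non-round maximizer exists, and for this you offer only what ``should'' happen, as you acknowledge.

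Several of the suggested fixes do not work as stated:
\begin{enumerate}
\item Continuity in $\sigma$ cannot give uniqueness at $\sigma^2=\frac23$, where the mode $k=1$ degenerates.
\item The reflection symmetry from Theorem \ref{thm:revolution} does not imply that $h$ has a single maximum.
\item The first integral you allude to does exist: $e^{-(h^2+h'^2)/(2\sigma^2)}+Ce^{-1/(2\sigma^2h^2)}$ is constant along solutions of the planar Euler--Lagrange equation. But using it requires showing that every nonconstant closed orbit has half-period larger than $\pi$. That is a global period estimate you do not provide.
\item A pointwise Sturm comparison with the linearization does not close the gap either. Differentiating $\rho=h+h''=Ch^{-3}\exp[\frac{1}{2\sigma^2}(h^2+h'^2-h^{-2})]$ shows that $u=h'$ solves
\[
u''+\bigl(1-\rho W\bigr)u=0,\qquad W=\frac{1}{\sigma^2}\bigl(\rho+h^{-3}\bigr)-\frac3h .
\]
There is no a priori pointwise bound $\rho W\ge0$. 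For instance, when $\sigma^2=\frac23$ we have $W<0$ wherever $h=1$ and $\rho<1$. So the potential is not dominated by that of $u''+u=0$.
\end{enumerate}

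\textbf{The missing idea.} Do the comparison in integrated form on each nodal interval $I$ of $u$. Multiplying by $u$ and integrating gives $\int_I(u')^2=\int_I u^2-J_I$, with $J_I=\int_I\rho Wu^2$. Substitute $\rho=h+u'$ and integrate by parts the terms $g(h)u'u^2=g(h)(u^3/3)'$, using $h'=u$ and $u=0$ on $\partial I$. This yields
\[
J_I=\frac{1}{\sigma^2}\int_I u^2\bigl(u'+h-h^{-1}\bigr)^2+\Bigl(\frac{2}{\sigma^2}-3\Bigr)\int_I u^2+\int_I\Bigl[\frac{h^{-4}}{\sigma^2}+\frac{2/\sigma^2-3}{3}h^{-2}\Bigr]u^4 .
\]
This is strictly positive for $\sigma^2\le\frac23$, endpoint included, because of the $h^{-4}u^4$ term.

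The argument then closes as follows:
\begin{enumerate}
\item Since $J_I>0$, we get $\int_I(u')^2<\int_I u^2$.
\item The Dirichlet Poincar\'e inequality then forces $|I|>\pi$.
\item Since $u=h'$ has zero mean on $S^1$, if $u\not\equiv0$ it has a positive and a negative nodal interval, of total length $>2\pi$. This is impossible.
\item Hence $h$ is constant, and the scaling argument gives $K=B_2^2$.
\end{enumerate}
This is the global, nonlinear form of the Sturm comparison your sketch needs, and it is exactly the paper's proof.
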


Second, for $n\ge 3$, we prove the following.

\begin{theorem}\label{thm:small-sigma}
Let $n\ge 3$, and $\sigma^2\le\frac1{ n}$,
then $K=B_2^n$ is the unique maximizer of $G^n_\sigma$.  
\end{theorem}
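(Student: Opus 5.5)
We want to show that for $n\ge3$ and $\sigma^2\le 1/n$ the ball $B_2^n$ is the unique maximizer. By the existence result announced in the abstract and Theorem~\ref{thm:revolution}, a maximizer $K$ exists, is a convex body, and is a body of revolution about some axis, say $e_n$. We plan to bound $G^n_\sigma(K)$ for such bodies and compare with the ball. The natural tool is the radial/polar representation. Write $\rho_K$ for the radial function of $K$ and $h_K=1/\rho_{K^\circ}$ for its support function. In polar coordinates,
\[
\gamma_\sigma^n(K)=\int_{S^{n-1}} F(\rho_K(\theta))\,d\theta,\qquad \gamma_\sigma^n(K^\circ)=\int_{S^{n-1}} F(1/h_K(\theta))\,d\theta,
\]
where $F(r)=c_{n,\sigma}\int_0^r t^{n-1}e^{-t^2/(2\sigma^2)}\,dt$. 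Since $\rho_K\le h_K$, the second factor is at most $\int F(1/\rho_K)$. It therefore suffices to prove
\[
\Big(\int_{S^{n-1}} F(\rho)\Big)\Big(\int_{S^{n-1}} F(1/\rho)\Big)\le F(1)^2|S^{n-1}|^2
\]
for every positive function $\rho$ on the sphere, with equality only when $\rho\equiv1$.

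\textbf{Step 1 (pointwise reduction).} By Cauchy--Schwarz-type arguments, it is enough to show two things. First, $\phi(s)=\log F(e^{s})$ satisfies $\phi(s)+\phi(-s)\le 2\phi(0)$. Second, and more strongly, we need a concavity-type property that lets us pass from the product of averages to averages. A cleaner route is to find a function $\psi$ with
\[
F(r)\le F(1)+\psi(r),\qquad F(1/r)\le F(1)-\psi(r)\cdot\lambda(r),
\]
so that after integration the cross terms cancel. Concretely, we would try the tangent-line trick in the variable $s=\log r$. Set $u(s)=F(e^s)/F(1)$. We want to show
\[
\Big(\int u(s(\theta))\Big)\Big(\int u(-s(\theta))\Big)\le|S^{n-1}|^2 .
\]
By AM--GM, the left side is at most $\tfrac14\big(\int (u(s)+u(-s))\big)^2$. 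So it suffices to have the pointwise bound
\[
u(s)+u(-s)\le 2\quad\text{for all } s,
\]
with equality only at $s=0$.

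\textbf{Step 2 (the one-variable inequality).} We must show $g(s)=F(e^s)+F(e^{-s})$ is maximized at $s=0$. We have $g'(s)=c\,(e^{ns}e^{-e^{2s}/2\sigma^2}-e^{-ns}e^{-e^{-2s}/2\sigma^2})$. Its sign is that of
\[
2ns-\frac{e^{2s}-e^{-2s}}{2\sigma^2}=2ns-\frac{\sinh(2s)}{\sigma^2}.
\]
Since $\sinh(2s)\ge 2s$ for $s\ge0$, when $\sigma^2\le 1/n$ this quantity is $\le 2s(n-1/\sigma^2)\le0$ for $s>0$, with the reverse sign for $s<0$. Hence $s=0$ is the strict global maximum, which gives the pointwise inequality. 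This is exactly where the hypothesis $\sigma^2\le1/n$ enters.

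\textbf{Step 3 (equality).} Equality forces $\rho_K\equiv1$ almost everywhere, hence $K=B_2^n$, and then also $h_K=\rho_K$, which is consistent. Note that this argument never actually uses the body-of-revolution structure, nor the restriction $n\ge3$; both are only needed if one wants a sharper threshold.

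\textbf{Main obstacle.} The step I expect to need the most care is the AM--GM reduction. The gain it provides is lossy, which is precisely why this method only reaches $\sigma^2\le1/n$ rather than the conjectured $2/(n+1)$. One must also check that the normalization (by $F(1)|S^{n-1}|$) is consistent, so that the equality case is tight at the ball.
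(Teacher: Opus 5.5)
Your argument is correct, and it takes a genuinely different and much more elementary route than the paper.

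\textbf{The paper's route.} The paper reasons only about a maximizer. It uses existence and Proposition~\ref{lem:maximizer-is-body}, then the full regularity machinery of Proposition~\ref{prop:regularity-maximizer} (regularity for the Gaussian Minkowski problem, $C^{2,\beta}$ estimates for the Minkowski problem, bootstrapping) to obtain the Euler--Lagrange equation \eqref{EL}. It then evaluates \eqref{EL} at a maximum and a minimum point of $h_K$, which gives $F_\sigma(m)\le C\le F_\sigma(M)$ with $F_\sigma(r)=r^{2n}\exp[-(r^2-r^{-2})/(2\sigma^2)]$. Strict monotonicity of $F_\sigma$, valid exactly when $\sigma^2\le 1/n$, forces $m=M$.

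\textbf{Your route.} You prove the global inequality directly, for every $K\in\class$, via
\[
G^n_\sigma(K)\le\Big(\int F(\rho_K)\Big)\Big(\int F(1/\rho_K)\Big)\le\frac14\Big(\int \big(F(\rho_K)+F(1/\rho_K)\big)\Big)^2\le\gamma_\sigma^n(B_2^n)^2 .
\]
The equality case is transparent, and you need no existence, symmetrization, or regularity. The argument works in every dimension, and for $n=1$ it recovers the sharp threshold $\sigma\le1$ of Theorem~\ref{thm:one-dimensional}.

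\textbf{Why the thresholds coincide.} The quantity $\log F_\sigma(e^s)=2ns-\sinh(2s)/\sigma^2$ is exactly the expression governing the sign of your $g'(s)$. The paper needs $F_\sigma$ monotone, i.e.\ $\cosh 2s\ge n\sigma^2$ for all $s$. You need only $F_\sigma(e^s)\le 1$ for $s>0$, i.e.\ $\sinh 2s\ge 2n\sigma^2 s$. Both conditions are equivalent to $n\sigma^2\le1$.

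\textbf{What the paper's route buys.} It gives rigidity for the Euler--Lagrange equation: every smooth strictly convex solution of \eqref{EL}, for any constant $C>0$, is a centered ball. A global inequality alone does not give this. It also fits the ODE framework the paper uses to reach the better planar threshold $2/3$.

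\textbf{Minor points.}
\begin{itemize}
\item Drop the exploratory part of Step~1 (the Cauchy--Schwarz and $\psi,\lambda$ ideas); the proof is just the chain above.
\item The values $\rho_K\in\{0,\infty\}$ are either excluded by Proposition~\ref{lem:maximizer-is-body} or covered by $\lim_{s\to\infty}g(s)<g(0)$.
\item At $\sigma^2=1/n$, use $\sinh 2s>2s$ for $s>0$ to get the strictness you need for uniqueness.
\end{itemize}

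\textbf{The source of the loss.} Your diagnosis is off: the loss is not in AM--GM. Take $\rho=e^{s\langle\theta,e\rangle}$ with $s$ small; this is the radial function of a convex body. The two integrals coincide by the symmetry $\theta\mapsto-\theta$, so AM--GM is an equality. Yet the middle product exceeds $\gamma_\sigma^n(B_2^n)^2$ whenever $\sigma^2>1/n$.

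So the barrier at $1/n$ comes from the step $h_K\ge\rho_K$, which forgets that $\rho_K$ and $h_K$ come from the same body. For the translates $B_2^n+\eps e$, this step accounts for the whole gap between the two second-order coefficients. Up to a common positive factor, your upper bound's coefficient is $n-\sigma^{-2}$, while that of $G^n_\sigma(B_2^n+\eps e)$ itself is $\frac{n+1}{2}-\sigma^{-2}$. Also, the paper leaves the range between $1/n$ and $2/(n+1)$ open rather than conjecturing $2/(n+1)$.
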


 In dimension $n\ge 3$, the gap between \( {\frac{1}{n}}\) and \( {\frac{2}{n+1}}\) remains open.

\subsection*{Organization} 
In Section~2, we prove Theorem \ref{thm:one-dimensional} which resolves the problem completely in dimension one.  In Section~3, we establish the existence of maximizers and prove that every maximizer is a bounded convex body containing the origin in its interior. We then prove Theorem \ref{thm:revolution} showing that every maximizer is a body of revolution. Section~4 is devoted to the regularity of maximizers. Finally, in Section~5, we analyze the role of the Euclidean unit ball. We show that it cannot be a maximizer when $\sigma^2>\frac{2}{n+1}$ (Theorem \ref{thm:not-ball}), while it is the unique maximizer in the plane for $\sigma^2\leq \frac{2}{3}$ (Theorem \ref{thm:small-sigma2}) and, in dimensions $n\geq3$, for $\sigma^2\leq \frac{1}{n}$ (Theorem \ref{thm:small-sigma}).

\section*{Acknowledgments.}
The first named author was supported by the Israel Science Foundation (grant 1626/25). The second and third named authors thank LAMA at the University Gustave Eiffel for their hospitality. 
\paragraph{AI-assisted tools.}
The authors used an LLM as an AI-assisted tool during the preparation of this manuscript. This includes mainly checking computations and suggesting some arguments. All statements and proofs were verified by the authors, who take full responsibility for the content.

\section{The one-dimensional problem}\label{sec:one-dim}

In dimension one, a convex set which includes the origin is an interval $K=[-a,b]$, where $a,b\in[0,\infty]$ (we denote $[-a,\infty] = [-a, \infty)$). Then 
\[
        K^\circ=[-a^{-1},b^{-1}],
\]
where we use the convention $\frac{1}{0}=\infty$ and $\frac{1}{\infty}=0$.

Fix $\sigma>0$. Maximizing $G_\sigma^1$ is equivalent to finding the global maximum of the function 
\begin{equation}\label{eq:Gab}
        G(a,b)=
        \left(\int_{-a}^{b} e^{-t^2/(2\sigma^2)}\dd t\right)
        \left(\int_{-1/a}^{1/b} e^{-s^2/(2\sigma^2)}\dd s\right).
\end{equation}
Note the two underlying symmetries 
\begin{equation}\label{eq:G-symmetries}
        G(a,b)=G(b,a)\quad \text{ and } \quad G(a,b)=G(1/a,1/b),
\end{equation}
which will be used repeatedly. In particular, it follows that we may work in the closed triangle
\[
        D=\{(a,b):0\le a\le b\}.
\]
\begin{remark}\label{rem:compactification}
Note that we may compactify this domain by the change of variables \(a=\tan\alpha\), \(b=\tan\beta\), where \(0\le\alpha\le\beta\le\pi/2\). The function \(G\) extends continuously to this compact triangle, with the usual conventions \(1/0=\infty\) and \(1/\infty=0\). Hence a maximizer exists.
\end{remark}

\begin{proof}[Proof of Theorem \ref{thm:one-dimensional}]

Define
\[
        H(t)=\int_0^t e^{-u^2/(2\sigma^2)}\dd u,
\]
and note that on $D=\{(a,b):0\le a\le b\}$ we have
\[
        G(a,b)=\bigl(H(b)-H(-a)\bigr)\bigl(H(1/b)-H(-1/a)\bigr).
\]

We will now proceed to find the maximizers.

\subsection*{Case 1: Boundary points}
The finite boundary of $D$ consists of the two faces \(a=0\) and
\(a=b\). The remaining case \(b=\infty\)  is equivalent to \(a=0\), because of $G(a,\infty)=G(0,1/a)$. Consequently it is enough to rule out the face \(a=0\) and to analyze the
symmetric face \(a=b\).

First, we note that on the face $a=b$  the interval is centrally symmetric. Hence, by Theorem \ref{thm:symmetric} the maximum in this class is attained uniquely at $a=b=1$.  

It remains to show that the points $(0,b)$ cannot maximize $G$. To this end we will show that there is a small perturbation in $a$ that increases $G$. Indeed, letting \(\varphi(t)=e^{-t^2/(2\sigma^2)}\), and fixing \(b\in(0,\infty]\) for
\(\varepsilon>0\), we have
\[
\int_{-\varepsilon}^{b}\varphi(t)\,dt
=
\int_{0}^{b}\varphi(t)\,dt+\varepsilon+o(\varepsilon),
\]
whereas for the polar interval we have
\[
\int_{-1/\varepsilon}^{1/b}\varphi(s)\,ds
=
\int_{-\infty}^{1/b}\varphi(s)\,ds
-
\int_{-\infty}^{-1/\varepsilon}\varphi(s)\,ds.
\]
Since
\[
        \int_{-\infty}^{-1/\varepsilon}\varphi(s)\,ds=o(\varepsilon),
\]
we get
\[
        G(\varepsilon,b)-G(0,b)
        =
        \varepsilon
        \int_{-\infty}^{1/b}\varphi(s)\,ds
        +o(\varepsilon)>0
\]
for all sufficiently small \(\varepsilon>0\). Thus no point with $a=0$ and $b>0$ is maximal. The ``corner" $(a,b)=(0,0)$ gives product zero and is not maximal.

Thus every maximizer is either the symmetric candidate \((1,1)\), or an
interior critical point of \(G\) in the region \(0<a<b<\infty\).

\subsection*{Case 2: Interior critical points}

Suppose that $(a,b)\in {\rm int}(D)$ is an interior critical point. Differentiating \eqref{eq:Gab} gives
\[
\begin{aligned}
        \frac{\partial G}{\partial a}(a,b)
        &=H'(-a)\bigl(H(1/b)-H(-1/a)\bigr)
        -\bigl(H(b)-H(-a)\bigr)H'(-1/a)a^{-2}, \\
        \frac{\partial G}{\partial b}(a,b)
        &=H'(b)\bigl(H(1/b)-H(-1/a)\bigr)
        -\bigl(H(b)-H(-a)\bigr)H'(1/b)b^{-2}.
\end{aligned}
\]
Therefore, at a critical point, we must have
\[
        \frac{H'(-a)a^2}{H'(-1/a)}
        =
        \frac{H'(b)b^2}{H'(1/b)}
        =
        \frac{H(b)-H(-a)}{H(1/b)-H(-1/a)}.
\]
Using that $H'(t)=e^{-t^2/(2\sigma^2)}$ and defining
\begin{equation*}\label{eq:F-one-dim}
        f_\sigma(x)=x\exp\left(-\frac{x-x^{-1}}{2\sigma^2}\right),\qquad x>0,
\end{equation*}
at every interior critical point, we must have
\begin{equation}\label{eq:crit-h}
        f_\sigma(a^2)=f_\sigma(b^2)=\frac{H(b)-H(-a)}{H(1/b)-H(-1/a)}
        =\frac{\gamma^1_\sigma([-a,b])}{\gamma^1_\sigma([-1/a,1/b])}:=h(a,b).
\end{equation}
Moreover, note that
\begin{equation}\label{eq:logF-derivative}
        (\log f_\sigma)'(x)
        =\frac1x-\frac1{2\sigma^2}\left(1+\frac1{x^2}\right)
        =-\frac{(x-\sigma^2)^2+1-\sigma^4}{2\sigma^2x^2}.
\end{equation}

\subsection{The case \texorpdfstring{$\sigma\le1$}{sigma <= 1}}

If $\sigma\le1$, then \eqref{eq:logF-derivative} shows that the derivative $(\log f_\sigma)' <0$, and since $f_\sigma(x)> 0$ for all $x>0$, we get that $f_\sigma$ is strictly decreasing on $(0,\infty)$. Thus \eqref{eq:crit-h} forces $a=b$, which is not an interior point.   Together with the boundary analysis, this proves the first part of Theorem \ref{thm:one-dimensional}.

\subsection{The case \texorpdfstring{$\sigma>1$}{sigma > 1}}

Assume now that $\sigma>1$. The derivative $(\log f_\sigma)'$ changes sign at
\[
        x_- = \sigma^2-\sqrt{\sigma^4-1},
        \qquad
        x_+ = \sigma^2+\sqrt{\sigma^4-1},
        \qquad x_-=\frac{1}{x_+}.
\]
Thus $f_\sigma$ decreases on $(0,x_-)$, increases on $(x_-,x_+)$, and decreases on $(x_+,\infty)$. Since $f_\sigma(1)=1$ and $1\in(x_-,x_+)$, there is a unique $a_\sigma>1$ satisfying $f_\sigma(a_\sigma^2)=1$, which is exactly \eqref{eq:asigma}.
Clearly, we also have that $f_\sigma(a_\sigma^{-2})=1$, because $f_\sigma(1/x)=1/f_\sigma(x)$.

We next show that $(a,b)=(a_\s^{-1},a_\s)$ are the only interior critical points, up to reflection. Let $(a,b)$ be an interior critical point with $a\le b$. Denote the common value in \eqref{eq:crit-h} by $c$.

{\bf 2.2.i) First suppose $c>1$.} Apart from the limiting case $c=f_\sigma(x_+)$, which follows by continuity from the same argument, the equation $f_\sigma(x)=c$ has three roots $y_1<y_2<y_3$, with
\[
        y_1<x_-<1<y_2<x_+<y_3.
\]
Then the possible pairs $(a^2,b^2)$ are $(y_1,y_2)$, $(y_1,y_3)$, and $(y_2,y_3)$.

For the pairs $(a^2,b^2) \in \{(y_1,y_2),(y_1,y_3)\}$ we have $ab<1$. Indeed, $y_1<1/y_2$ follows from $y_1<x_-=1/x_+<1/y_2$. Also $1/y_3<x_-$ and $f_\sigma(1/y_3)=1/c<f_\sigma(y_1)$; since $f_\sigma$ is strictly decreasing on $(0,x_-)$, this gives $y_1<1/y_3$. Thus
\[
        -[-a,b]=[-b,a]\subsetneq[-1/a,1/b]=[-a,b]^\circ.
\]
Using this inclusion and the evenness of the Gaussian density, $\gamma_\sigma^1([-a,b])<\gamma_\sigma^1([-1/a,1/b])$, that is, $h(a,b)<1<c$, contradicting \eqref{eq:crit-h}.

It remains to rule out $(a^2,b^2)=(y_2,y_3)$. For fixed $a_0>0$, the function $b\mapsto h(a_0,b)$ is strictly increasing, and the same is true in the $a$ variable. 
Hence, we have that 
\[ 1=h(1,1)<h(a,1)<h(a,a).
\]
Consider the function $F(a)=h(a,a)-f_\sigma(a^2)$. Since $f_\sigma(1)=1$, we have that  $F(1)=1-1=0$. Moreover, we claim that $F(a)>0$ for all $a>1$. This implies that for $a<b$, since $f_\sigma(b^2)= f_\sigma(a^2) <h(a,a)<h(a,b)$, we cannot have equality, hence $(y_2,y_3)$ cannot be a solution to the equation  \eqref{eq:crit-h}.

Let $\eta(a)=G(a,a)$. Computing its derivative, we see that $F(a)>0$ if and only if $\eta'(a)<0$. By the (B)-theorem, $t\mapsto \eta(e^t)$ is log-concave on $\R$ and  since it is even, it is maximal at $t=0$. This implies that $\eta$ is increasing on $(0,1)$ and decreasing on $(1,+\infty)$. In particular, the only solution to $F(a)=0$ for $a\ge1$ is $a=1$. Therefore, 
$F(a)>0$ for all $a>1$.

{\bf 2.2.ii) Suppose $c<1$. }  Since the polar symmetry $(a,b)\mapsto(1/a,1/b)$ sends a critical point with common value $c$ to a critical point with common value $1/c$, it also excludes all critical points with $c<1$.

{\bf 2.2.iii) Suppose $c=1$.} It remains to examine $c=1$. The roots of $f_\sigma(x)=1$ are
\[
        a_\sigma^{-2},\qquad 1,
        \qquad a_\sigma^2.
\]
The condition $h(a,b)=1$ eliminates all pairs except $(a,b)=(1,1)$ and, up to reflection, $(a,b)=(a_\sigma^{-1},a_\sigma)$.

 To compare the two, we work as above, define 
\[ I(a) = \int_{-1/a}^a e^{-x^2/2\sigma^2}dx,\]
then $G(1/a,a)=I^2(a)$ and
\[ I'(a) = e^{-a^2/2\sigma^2}  - e^{-(1/a^2)/2\sigma^2}a^{-2}.\]
We see that $I'(a)>0$ if and only if $f_\s(a^2)>1$ which means, as we did when studying $f_\s$, that $I(a)$ is increasing on $(0,1/a_\sigma)$, decreasing on $(1/a_\sigma, 1)$, increasing on $(1,a_\sigma)$ and decreasing again on $(a_\s, \infty)$. Hence $I$ is maximized at $a=a_\sigma$, and this value is strictly larger than $I(1)$. This completes the proof of Theorem \ref{thm:one-dimensional}. \end{proof}

\section{The structure of the maximizers: first observations}

\subsection{Existence of maximizers}\label{sec:existence}
The existence follows from the compactness of the family $\class$,
equipped with the local Hausdorff topology, and the continuity of the
functional. We include the details for completeness.

\begin{proposition}\label{prop:existence}
For every $\sigma>0$ and every $n\ge1$, the supremum of $G^n_\sigma(A)$ over all subsets $A\subset\R^n$ is attained by some $K\in \class$.
\end{proposition}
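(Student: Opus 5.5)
By the reduction in the introduction, $G^n_\sigma(A)\le G^n_\sigma(A^{\circ\circ})$ and $A^{\circ\circ}\in\class$, so it suffices to show that $G^n_\sigma$ attains its supremum $M$ on $\class$. Note $M>0$, since for instance $G^n_\sigma(B_2^n)>0$. Take a maximizing sequence $K_j\in\class$ with $G^n_\sigma(K_j)\to M$. I will use the local Hausdorff topology on closed sets: $K_j\to K$ if $K_j\cap RB_2^n\to K\cap RB_2^n$ in the Hausdorff metric for every $R>0$. By the Blaschke selection theorem, applied on balls $RB_2^n$ together with a diagonal argument, every sequence in $\class$ has a subsequence converging in this sense to some $K\in\class$. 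Convexity, closedness, and $0\in K$ all pass to the limit. The limit may be unbounded, or even degenerate (lower-dimensional), and this is allowed in $\class$.

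Passing to a subsequence, I get $K_j\to K$. Since $K_j^\circ\in\class$ as well, a further subsequence gives $K_j^\circ\to L$. The main step is to show that both factors converge, that is, $\gamma_\sigma^n(K_j)\to\gamma_\sigma^n(K)$ and $\gamma_\sigma^n(K_j^\circ)\to\gamma_\sigma^n(L)$, and that $L\subset K^\circ$.

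For the measures, I will use the fact that local Hausdorff convergence of closed convex sets implies convergence of the indicator functions at every point outside $\partial K$. If $\operatorname{int}K\neq\emptyset$, then $\partial K$ is Lebesgue-null. If $K$ is lower-dimensional, then $K$ itself is null, and the indicators tend to $0$ a.e. off the affine hull; on the hull they are bounded by $1$ and the hull is null anyway. In both cases dominated convergence against the Gaussian density, which is integrable, gives $\gamma_\sigma^n(K_j)\to\gamma_\sigma^n(K)$, and similarly for $L$.

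For the inclusion $L\subset K^\circ$, take $y\in L$ and $x\in K$. There exist $y_j\in K_j^\circ$ with $y_j\to y$ and $x_j\in K_j$ with $x_j\to x$. Since $\ip{x_j}{y_j}\le 1$ for each $j$, the limit gives $\ip{x}{y}\le 1$, so $y\in K^\circ$.

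Combining these,
\[
M=\lim_j \gamma_\sigma^n(K_j)\,\gamma_\sigma^n(K_j^\circ)=\gamma_\sigma^n(K)\,\gamma_\sigma^n(L)\le \gamma_\sigma^n(K)\,\gamma_\sigma^n(K^\circ)=G^n_\sigma(K)\le M,
\]
so $K$ is a maximizer. Only upper semicontinuity is needed, which is why the inclusion $L\subset K^\circ$ suffices and I never have to prove continuity of polarity.

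The main obstacle is the measure-convergence step when the limit is unbounded or degenerate, namely justifying pointwise a.e. convergence of the indicators. I expect to handle it by the standard fact that a point in $\operatorname{int}K$ lies in $K_j$ for all large $j$, by convexity and local Hausdorff closeness, while a point at positive distance from $K$ lies outside $K_j$ for all large $j$. This leaves only $\partial K$, or the null set $K$ in the degenerate case, where convergence may fail.
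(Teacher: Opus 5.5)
Your proof is correct and follows the same compactness-plus-continuity scheme as the paper: compactness of $\class$ in the local Hausdorff topology, together with continuity of $K\mapsto\gamma_\sigma^n(K)$ via a.e.\ convergence of indicators off the null set $\partial K$. The one real difference is that the paper cites continuity of polarity (Wijsman) to get full continuity of $G^n_\sigma$, while you pass to a further subsequence $K_j^\circ\to L$ and prove only the elementary inclusion $L\subset K^\circ$, which suffices because only upper semicontinuity is needed; the consistency of your diagonal limits across radii, which you gloss over, does hold because each $K_j$ is convex and contains $0$.
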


\begin{proof}
We use the local Hausdorff topology on closed convex sets.
Every sequence in $\class$ has a locally Hausdorff convergent
subsequence. Indeed, the common point $0$ rules out escape to the
horizon, so the subsequence-extraction theorem for set convergence
applies; the limit remains closed and convex and contains $0$.
For closed convex sets, set convergence is equivalent to Hausdorff
convergence of bounded truncations. It follows
that $\class$ is compact in the local Hausdorff topology; see
\cite[Chapter 4]{RockafellarWets}. % \cite[Proposition~4.15, Exercise~4.16, and Theorems~4.18 and~4.42]{RockafellarWets}

Polarity is continuous in this topology; see
\cite[Theorem~7.2]{Wijsman1966}.
%Polarity is continuous on this class for the local Hausdorff topology. 
Moreover, if $K_j\to K$ locally in the Hausdorff sense, then
\[
        \gamma_\sigma^n(K_j)\to\gamma_\sigma^n(K).
\]
Indeed, after intersecting with a large Euclidean ball, Hausdorff convergence of convex sets implies pointwise convergence of the indicators away from the boundary of $K$; the boundary of a proper closed convex set has Lebesgue, hence Gaussian, measure zero, and the Gaussian tail outside the large ball is uniformly small. The cases $K=\R^n$ and lower-dimensional $K$ are handled by the same argument, with the evident interpretations. Applying the same reasoning to the polars gives
\[
        \gamma_\sigma^n(K_j^\circ)\to\gamma_\sigma^n(K^\circ).
\]
Thus, $G^n_\sigma$ is continuous on the compact space $\class$, and therefore it attains its maximum.
\end{proof}

\subsection{Boundedness of maximizers}\label{sec:bddness}

It turns out that the maximizers must be bounded and include the origin in their interior. This fact  utilizes the fast decay of the measure  $\gamma_\sigma^n$.  

\begin{proposition}\label{lem:maximizer-is-body}
Let $K\in\class$ be a maximizer of $G^n_\sigma$.  Then both $K$ and
$K^\circ$ are bounded convex sets and
\[
 0\in\operatorname{int}K\cap\operatorname{int}K^\circ.
\]
\end{proposition}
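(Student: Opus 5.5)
Since $K^{\circ\circ}=K$ for $K\in\class$, we have $G^n_\sigma(K^\circ)=G^n_\sigma(K)$, so $K^\circ$ is again a maximizer. For $K\in\class$, $K$ is bounded if and only if $0\in\operatorname{int}K^\circ$. It therefore suffices to prove the single claim
\[
\text{every maximizer } K\in\class \text{ satisfies } 0\in\operatorname{int}K .
\]
Applying the claim to $K^\circ$ gives $0\in\operatorname{int}K^\circ$, hence boundedness of $K$. Symmetrically, $0\in\operatorname{int}K$ gives boundedness of $K^\circ$. This is the $n$-dimensional analogue of the boundary analysis in Case 1 of the proof of Theorem~\ref{thm:one-dimensional}.

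First I check that $K$ and $K^\circ$ have nonempty interior. Otherwise one of $\gamma_\sigma^n(K)$, $\gamma_\sigma^n(K^\circ)$ vanishes, so $G^n_\sigma(K)=0<\gamma_\sigma^n(B_2^n)^2=G^n_\sigma(B_2^n)$, contradicting maximality.

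Now suppose $0\in\partial K$. By the supporting hyperplane theorem there is a unit vector $u$ with $K\subset\{x:\ip{x}{u}\ge0\}$. For $\eps>0$ put $K_\eps=\conv(K\cup\{-\eps u\})\in\class$. I compare the two factors of $G^n_\sigma(K_\eps)$ with those of $G^n_\sigma(K)$.

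\emph{Polar side.} We have $K_\eps^\circ=K^\circ\cap\{y:\ip{y}{u}\ge-1/\eps\}$. Hence
\[
\gamma_\sigma^n(K^\circ)-\gamma_\sigma^n(K_\eps^\circ)\le\gamma_\sigma^1\bigl((-\infty,-1/\eps)\bigr)\le e^{-1/(2\sigma^2\eps^2)}.
\]

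\emph{Primal side.} The gain is $\gamma_\sigma^n(K_\eps\setminus K)\ge\gamma_\sigma^n\bigl(K_\eps\cap\{\ip{x}{u}<0\}\bigr)$. To bound this from below, pick a ball $B(x_0,r)\subset K$. Then $K_\eps\supset\conv(\{-\eps u\}\cup B(x_0,r))$. The part of this convex hull lying in $\{\ip{x}{u}<0\}$ is a truncated cone with apex $-\eps u$ and height $\eps$. Its cross-sections have radius of order $\eps r/(\ip{x_0}{u}+\eps)$, so its Lebesgue volume is at least $c\,\eps^n$, with $c>0$ depending only on $x_0,r,u$. This cone lies inside the unit ball for small $\eps$, where the Gaussian density is bounded below, so the gain is at least $c'\eps^n$.

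Combining the two sides,
\[
G^n_\sigma(K_\eps)-G^n_\sigma(K)\ \ge\ c'\eps^n\,\gamma_\sigma^n(K^\circ)\;-\;\gamma_\sigma^n(K_\eps)\,e^{-1/(2\sigma^2\eps^2)}-O\bigl(\eps^n e^{-1/(2\sigma^2\eps^2)}\bigr).
\]
Since $\gamma_\sigma^n(K^\circ)>0$, this is strictly positive for small $\eps$. This contradicts maximality, so $0\in\operatorname{int}K$.

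The main obstacle is the primal lower bound. Unlike in dimension one, the gain need not be linear in $\eps$, because $K$ may be a thin cone near $0$. The key point is that any polynomial lower bound $\eps^n$ suffices, since the polar loss is a Gaussian tail $e^{-1/(2\sigma^2\eps^2)}$, which is smaller than every power of $\eps$. The only care needed is to make the cone-volume estimate precise and uniform in $\eps$.
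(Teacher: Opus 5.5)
Your proof is correct and uses the same strategy as the paper: enlarge $K$ near the origin on the empty side of a supporting hyperplane, gain at least $c\,\eps^n$ of Gaussian measure, and lose only a Gaussian tail on the polar side. The paper adds $\eps B_2^n$ instead of the single point $-\eps u$, so its gain bound $\frac12\gamma_\sigma^n(\eps B_2^n)$ is immediate and needs no interior ball or cone estimate, at the price of an $n$-dimensional tail $\gamma_\sigma^n(\R^n\setminus\eps^{-1}B_2^n)$; also, as in the paper, you should take the closed convex hull, since $\conv(K\cup\{-\eps u\})$ need not be closed when $K$ is unbounded (closing it changes neither its Gaussian measure nor its polar).
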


\begin{proof}
Since $G^n_\sigma(B_2^n)>0$, maximality gives
$\gamma_\sigma^n(K)>0$ and $\gamma_\sigma^n(K^\circ)>0$.
We prove that the origin is an interior point of $K$.  The same argument can
then be applied to $K^\circ$, which is also a maximizer because
$K^{\circ\circ}=K$. This will also imply boundedness since the dual of a set which includes the origin in its interior is bounded. 

Suppose, towards a contradiction, that $0\in\partial K$.  Choose $u\in S^{n-1}$ so that
\[
 K\subset\{x:\langle x,u\rangle\geq 0\}.
\]
For $\varepsilon>0$, set
\[
 K_\varepsilon
 =\overline{\operatorname{conv}}\bigl(K\cup\varepsilon B_2^n\bigr).
\]
Then
\[
 K_\varepsilon^\circ
 =K^\circ\cap\varepsilon^{-1}B_2^n.
\]
Writing
\[
 \delta_\varepsilon
 =\gamma_\sigma^n(K_\varepsilon)-\gamma_\sigma^n(K)>0,
 \qquad
 \eta_\varepsilon
 =\gamma_\sigma^n(K^\circ)-\gamma_\sigma^n(K_\varepsilon^\circ)>0,
\]
we have
\[
 \delta_\varepsilon
 \geq \gamma_\sigma^n\bigl(\varepsilon B_2^n
       \cap\{x:\langle x,u\rangle<0\}\bigr)
 =\frac12\gamma_\sigma^n(\varepsilon B_2^n)
 \geq c_{n,\sigma}\varepsilon^n
\]
for sufficiently small $\varepsilon$, whereas
\[
 0\leq\eta_\varepsilon
 \leq\gamma_\sigma^n\bigl(\mathbb R^n\setminus
       \varepsilon^{-1}B_2^n\bigr)
 =c'_{n,\sigma}\int_{1/\varepsilon}^{\infty}
r^{n-1}e^{-r^2/(2\sigma^2)} dr = o(\varepsilon^n).
\]
We see that 
\[
 \begin{aligned}
 G^n_\sigma(K_\varepsilon)-G^n_\sigma(K) %= 
%\gamma_\sigma^n(K_\eps^\circ)\gamma_\sigma^n(K_\eps) - 
%\gamma_\sigma^n(K^\circ)\gamma_\sigma^n(K)
 & = 
  (\gamma_\sigma^n(K^\circ)-\eta_\eps)(\gamma_\sigma^n(K)+\delta_\eps) - 
\gamma_\sigma^n(K^\circ)\gamma_\sigma^n(K)\\
 &\ge \gamma_\sigma^n(K^\circ)\delta_\varepsilon-%(\gamma_\sigma^n(K)+\delta_\varepsilon)
 \eta_\varepsilon \geq \gamma_\sigma^n(K^\circ)c_{n,\sigma}\varepsilon^n-o(\varepsilon^n)
 \end{aligned}
\]
so that for small $\varepsilon$ this quantity is positive, which is a contradiction.  Hence $0\in\operatorname{int}K$, which also implies that $K^\circ$ is bounded. Applying the same argument to $K^\circ$ completes the proof. 
\end{proof}

\subsection{Maximizers are bodies of revolution}\label{sec:symmetrization}

In this section, we show that one can symmetrize a given set with respect to certain medial hyperplanes while increasing the Gaussian volume product. This is an adaptation of the approach of Meyer and Pajor \cite{MeyerPajorBlaschke}, originating in Ball \cite{Ball-thesis}. This procedure can then be iterated, but unlike in the classical case, not in all directions, so that it produces, in the limit, a body a revolution. 

First, we recall the definition of  Steiner symmetrization with respect to a hyperplane $H=\theta^\perp$ containing the origin, and show it increases the Gaussian measure of the set. Let $P_H$ be the orthogonal projection onto $H$. For a closed convex set $K\subset\R^n$, define the fiber
\[
        K(x)=\{t\in\R:x+t\theta\in K\},\qquad x\in P_HK.
\]
The Steiner symmetral of $K$ with respect to $H$ is
\[
        S_HK=\left\{x+t\theta:x\in P_HK,\ |t|\le \frac{|K(x)|}{2}\right\},
\]
with the convention that a fiber of infinite length is replaced by the whole line.
\begin{lemma}\label{lem:gauss-steiner}
For every closed convex set $K\subset\R^n$ with non-empty interior and every hyperplane $H$ containing the origin,
\[
        \gamma_\sigma^n(S_HK)\ge\gamma_\sigma^n(K).
\]
Moreover, for a closed convex body $K$, equality implies $S_HK=K$.
\end{lemma}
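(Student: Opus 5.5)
We argue fiber by fiber via Fubini. Write points of $\R^n$ as $x+t\theta$ with $x\in H$ and $t\in\R$. The Gaussian density factors as
\[
\frac{1}{(2\pi\sigma^2)^{n/2}}e^{-|x|^2/(2\sigma^2)}e^{-t^2/(2\sigma^2)},
\]
so
\[
\gamma_\sigma^n(K)=\int_{P_HK}\frac{e^{-|x|^2/(2\sigma^2)}}{(2\pi\sigma^2)^{(n-1)/2}}\,\gamma_\sigma^1(K(x))\dd x,
\]
and the same formula holds for $S_HK$ with $K(x)$ replaced by the centered interval $[-|K(x)|/2,|K(x)|/2]$. Since $P_H(S_HK)=P_HK$, it suffices to compare the one-dimensional measures fiber by fiber.

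By convexity, each fiber $K(x)$ is a (possibly unbounded or degenerate) closed interval $[\alpha,\beta]$. The key one-dimensional fact is that among intervals of fixed length $\ell$, the centered one has maximal $\gamma_\sigma^1$-measure, with equality only for the centered one. To see this, let $\psi(c)=\int_{c-\ell/2}^{c+\ell/2}e^{-t^2/(2\sigma^2)}\dd t$. Then
\[
\psi'(c)=e^{-(c+\ell/2)^2/(2\sigma^2)}-e^{-(c-\ell/2)^2/(2\sigma^2)},
\]
which has the sign of $-c$ when $\ell>0$. Hence $\psi$ is strictly maximized at $c=0$. (Alternatively, this is Anderson's inequality, or the rearrangement inequality for the symmetric decreasing density.) Fibers of infinite length are either half-lines or full lines. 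The convention replaces them by $\R$, which has measure at least as large, and strictly larger for a half-line. Fibers of length zero contribute nothing on either side. Integrating the fiberwise inequality over $x\in P_HK$ yields $\gamma_\sigma^n(S_HK)\ge\gamma_\sigma^n(K)$.

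For the equality case, take $K$ a convex body, so every fiber is bounded. If $\gamma_\sigma^n(S_HK)=\gamma_\sigma^n(K)$, then the nonnegative integrand difference vanishes for almost every $x\in P_HK$. By strictness in the one-dimensional step, almost every fiber of positive length is centered, that is, its midpoint $m(x)=(\alpha(x)+\beta(x))/2$ is $0$. The main obstacle is upgrading ``almost every'' to ``every''. The plan is to use that on the relative interior of $P_HK$ the endpoint functions $\beta$ (concave) and $\alpha$ (convex) are continuous, and the fibers there have positive length because $K$ has nonempty interior. Hence $m$ is continuous there and vanishes on a dense set, so $m\equiv0$ on the relative interior. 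Both $K$ and $S_HK$ are closed, and each is the closure of its part lying over the relative interior of $P_HK$. The two parts coincide, so $K=S_HK$.
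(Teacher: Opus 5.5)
Your proof is correct and follows the same route as the paper: Fubini over $P_HK$, then the one-dimensional fact that the centered interval of a given length maximizes $\gamma_\sigma^1$ (you verify it by differentiating $\psi$, where the paper cites log-concavity and symmetry), then integration of the fiberwise inequality. Your equality argument, via continuity of the endpoint functions on the interior of $P_HK$ plus a closure step, just spells out what the paper states in one line; it implicitly uses the standard fact that $S_HK$ is again a closed convex body.
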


\begin{proof}
Without loss of generality, we may rotate $\R^n$, so that $H=e_n^\perp$.  By Fubini, and the fact that $\gamma_\sigma^n$ is a product measure we have
\[
        \gamma_\sigma^n(K)=\int_{P_HK}\gamma_\sigma^1(K(x))\dd\gamma_\sigma^{n-1}(x),
\]
whereas
\[
        \gamma_\sigma^n(S_HK)=\int_{P_HK}\gamma_\sigma^1\left(\left[-\frac{|K(x)|}{2},\frac{|K(x)|}{2}\right]\right)\dd\gamma_\sigma^{n-1}(x).
\]
Since $\gamma_\sigma^1$ is log-concave and symmetric,  among intervals of a fixed length, the centered interval has maximal one-dimensional Gaussian measure. The inequality follows by integrating this one-dimensional inequality over $P_HK$.

The one-dimensional inequality is strict unless the interval is already centered. Hence equality in the integral forces almost every fiber to be centered, which for a closed convex set must hold for all fibers. Thus $S_HK=K$.
\end{proof}

Next, we show that the Gaussian measure of the polar also increases under Steiner symmetrization. For this, one needs to assume that the hyperplane $H$ is \emph{medial} to $K^\circ$. If $H=\theta^\perp$, set
\[
        H^+=\{x\in\R^n:\ip{x}{\theta}>0\},\qquad
        H^-=\{x\in\R^n:\ip{x}{\theta}<0\}.
\]
We say that $H$ is medial for a measurable set $L$ if
\[
        \gamma_\sigma^n(L\cap H^+)=\gamma_\sigma^n(L\cap H^-).
\]
We use the following lemma of Ball \cite{Ball-thesis}, in the form used by Meyer and Pajor.
\begin{lemma}[Ball \cite{Ball-thesis}]\label{lem:Ball}
Let $f,g,h:(0,\infty)\to[0,\infty)$ be integrable functions such that, for every $s,t>0$,
\[
        h\left(\frac{2st}{s+t}\right)
        \ge f(s)^{t/(s+t)}g(t)^{s/(s+t)}.
\]
Then
\[
        \left(\int_0^\infty h(u)\dd u\right)^{-1}
        \le
        \frac12\left(\int_0^\infty f(s)\dd s\right)^{-1}
        +
        \frac12\left(\int_0^\infty g(t)\dd t\right)^{-1},
\]
with the usual convention if one of the integrals is zero.
\end{lemma}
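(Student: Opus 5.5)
The statement is Ball's lemma, a Prékopa--Leindler-type inequality with harmonic means. I would prove it by reducing to the multiplicative Prékopa--Leindler inequality on $\R$ through a change of variables that turns harmonic combinations into arithmetic ones.

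\emph{Normalization.} Set $F=\int_0^\infty f$, $G=\int_0^\infty g$ and $I=\int_0^\infty h$. We may assume $F,G>0$; if one of them vanishes, the right-hand side is $+\infty$ under the convention and there is nothing to prove.

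\emph{Substitution.} Put $s=1/x$ and $t=1/y$. Then
\[
\frac{2st}{s+t}=\frac{2}{x+y},\qquad \frac{t}{s+t}=\frac{x}{x+y},\qquad \frac{s}{s+t}=\frac{y}{x+y}.
\]
Define $\tilde f(x)=x^{-2}f(1/x)$, and similarly $\tilde g$ and $\tilde h$. The Jacobian shows that $\tilde f,\tilde g,\tilde h$ have the same integrals $F,G,I$ over $(0,\infty)$.

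The hypothesis $h\bigl(\tfrac{2}{x+y}\bigr)\ge f(1/x)^{x/(x+y)}g(1/y)^{y/(x+y)}$ becomes, with $\lambda=\frac{x}{x+y}$ and $z=\frac{x+y}{2}$,
\[
\tilde h(z)\,z^2\ \ge\ \bigl(x^2\tilde f(x)\bigr)^{\lambda}\bigl(y^2\tilde g(y)\bigr)^{1-\lambda}.
\]
Since $z=\frac{x+y}{2}$, we have $\lambda x+(1-\lambda)y=\frac{x^2+y^2}{x+y}\ge z$. So the arithmetic point $z$ and the weights $\lambda$ are genuinely tied to $x,y$, and the $z^2$ factor has to be handled. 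The cleanest route is the standard one from Ball's thesis, which I would follow.

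\emph{Level-set argument.} I would prove the inequality by transport, or equivalently by level sets. Choose increasing maps $x(\tau)$ and $y(\tau)$ for $\tau\in[0,1]$ with
\[
\int_0^{x(\tau)}\tilde f=\tau F,\qquad \int_0^{y(\tau)}\tilde g=\tau G,
\]
so that $x'\tilde f(x)=F$ and $y'\tilde g(y)=G$. Set $z(\tau)=\frac{x(\tau)+y(\tau)}{2}$. Then
\[
I\ge\int_0^1\tilde h(z(\tau))\,z'(\tau)\,d\tau,\qquad z'=\tfrac12\Bigl(\frac{F}{\tilde f(x)}+\frac{G}{\tilde g(y)}\Bigr).
\]
Plug in the hypothesis with $\lambda=\frac{x}{x+y}$ and write $\tilde f(x)=F/x'$, $\tilde g(y)=G/y'$. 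Pointwise in $\tau$, the integrand is then bounded below by an expression in $x,y,x',y',F,G$. Applying weighted AM--GM, in the form $\lambda a+(1-\lambda)b\ge a^\lambda b^{1-\lambda}$, with weights matched to the harmonic structure should produce a lower bound on $I$ by $\bigl(\tfrac12F^{-1}+\tfrac12G^{-1}\bigr)^{-1}$. In doing so I would try to exploit the fact that $x/(x+y)$ is exactly the weight making $\frac{x}{x+y}\cdot\frac{1}{x}\cdot(\cdot)$ combine into the harmonic mean.

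\emph{Main obstacle.} The difficulty is making this pointwise AM--GM close exactly, because of the $z^2$ versus $x^2,y^2$ mismatch. If it does not, I would fall back on Ball's original trick: apply the hypothesis along rays, noting that $u=\frac{2st}{s+t}$ is the harmonic mean. Writing $\int_0^\infty h=\int_0^\infty|\{h>r\}|\,dr$ and using that level sets satisfy a harmonic Brunn--Minkowski inclusion, $\{h>r\}\supseteq \frac{2st}{s+t}$-combinations of $\{f>r\}$ and $\{g>r\}$, one reduces the problem to the one-dimensional fact that harmonic combinations of intervals have length bounded below by the harmonic mean of the lengths. That fact is elementary, by monotonicity of $(s,t)\mapsto\frac{2st}{s+t}$. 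One then integrates over $r$ and uses Hölder/Minkowski for the harmonic mean to finish.
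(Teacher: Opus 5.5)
\textbf{The decisive step is missing.} The paper does not prove this lemma (it is quoted from Ball's thesis), so your argument has to stand on its own. Your set-up is the right one: the substitution $s=1/x$, the functions $\tilde f,\tilde g,\tilde h$, the transport maps, $z=\frac{x+y}{2}$ and $z'=\frac12\bigl(F/\tilde f(x)+G/\tilde g(y)\bigr)$. But the one step where the lemma is actually proved is only announced (``should produce''). That step is the pointwise bound $\tilde h(z)z'\ge\frac{2FG}{F+G}$, and you yourself flag it as the main obstacle. As written, this is a gap.

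\textbf{It does close, with two weighted AM--GM steps.} Put $a=\tilde f(x)$, $b=\tilde g(y)$, $\lambda=\frac{x}{x+y}$. Since $\lambda z/x=(1-\lambda)z/y=\frac12$,
\[
z'=z\Bigl(\lambda\,\frac{F}{xa}+(1-\lambda)\,\frac{G}{yb}\Bigr)\ge z\Bigl(\frac{F}{xa}\Bigr)^{\lambda}\Bigl(\frac{G}{yb}\Bigr)^{1-\lambda}.
\]
Multiplying by $\tilde h(z)\ge z^{-2}(x^2a)^{\lambda}(y^2b)^{1-\lambda}$, the unknown values $a,b$ cancel:
\[
\tilde h(z)\,z'\ge \frac{x^{\lambda}y^{1-\lambda}}{z}\,F^{\lambda}G^{1-\lambda}=2\lambda^{\lambda}(1-\lambda)^{1-\lambda}F^{\lambda}G^{1-\lambda}.
\]
Set $\mu=\frac{F}{F+G}$. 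AM--GM once more gives $\bigl(\frac{1-\mu}{\lambda}\bigr)^{\lambda}\bigl(\frac{\mu}{1-\lambda}\bigr)^{1-\lambda}\le(1-\mu)+\mu=1$, which rearranges to $2\lambda^{\lambda}(1-\lambda)^{1-\lambda}F^{\lambda}G^{1-\lambda}\ge\frac{2FG}{F+G}$. Integrating over $\tau\in(0,1)$ then gives $I\ge\frac{2FG}{F+G}$. The usual care about generalized inverses is needed, exactly as in the transport proof of one-dimensional Pr\'ekopa--Leindler.

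Conceptually, a single weighted AM--GM shows that your hypothesis implies
\[
\tilde h\bigl(\tfrac{x+y}{2}\bigr)\ge\bigl(\tfrac12\tilde f(x)^{-1/2}+\tfrac12\tilde g(y)^{-1/2}\bigr)^{-2}.
\]
This is the one-dimensional Borell--Brascamp--Lieb condition with $p=-\frac12$, whose conclusion is precisely the harmonic mean of the integrals. That is why the $z^2$ versus $x^2,y^2$ mismatch is harmless.

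\textbf{Your fallback would fail.} The inclusion $\{h>r\}\supseteq\bigl\{\frac{2st}{s+t}: f(s)>r,\ g(t)>r\bigr\}$ is correct. So is the levelwise bound $|\{h>r\}|\ge M_{-1}\bigl(|\{f>r\}|,|\{g>r\}|\bigr)$, where $M_{-1}(p,q)=\frac{2pq}{p+q}$. That bound, however, is the $(-1)$-concavity of $x^{-2}\,dx$ in the reciprocal variable, not a consequence of monotonicity alone.

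The problem is the last step. Write $\phi(r)=|\{f>r\}|$ and $\psi(r)=|\{g>r\}|$. Since $M_{-1}$ is concave and $1$-homogeneous, it is superadditive, so
\[
\int_0^\infty M_{-1}(\phi(r),\psi(r))\,dr\le M_{-1}\Bigl(\int\phi,\int\psi\Bigr).
\]
This is the wrong direction, and no H\"older or Minkowski inequality reverses it. Nor can you normalize $\|f\|_\infty=\|g\|_\infty$ as in the level-set proof of Pr\'ekopa--Leindler, because the exponents $t/(s+t)$ vary with the points.

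The loss is real. Take $f=\mathbf{1}_{(0,1)}$ and $g=2\cdot\mathbf{1}_{(0,1/2)}$, both of integral $1$. The levelwise bound yields only $\int h\ge\frac23$, whereas the lemma asserts $\int h\ge1$. The missing mass comes from pairing $f$ at level $1$ with $g$ at level $2$, which comparing level sets at a common height never sees.
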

\begin{proposition}\label{prop:dual-steiner}
Let $K\subset\R^n$ be closed and convex, and assume that $H$ is medial for $K^\circ$. Then
\[
        \gamma_\sigma^n((S_HK)^\circ)\ge \gamma_\sigma^n(K^\circ).
\]
\end{proposition}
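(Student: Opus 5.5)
Following Meyer--Pajor, I would rotate so that $H=e_n^\perp$, $\theta=e_n$, and write points as $(y,s)\in H\times\R$. For $y\in H$ I describe the fibres of $K^\circ$ and $(S_HK)^\circ$ over the hyperplane in a form suited to Lemma~\ref{lem:Ball}. The key observation is that $(x,t)\in K$ and $(x,t')\in K$ imply $(x,(t-t')/2)\in S_HK$. Dually, if $(y,s)\in K^\circ$ and $(y,-s')\in K^\circ$ with $s,s'\ge 0$ (after the sign conventions), then a suitable harmonic-type combination lies in $(S_HK)^\circ$. The cleanest form uses the height parametrization. For $r>0$ consider the sections
\[
A_+(r)=\{z\in H:(z/r,1/r)\in K^\circ\},\qquad A_-(r)=\{z\in H:(z/r,-1/r)\in K^\circ\},
\]
and the analogous $A_0(r)$ for $(S_HK)^\circ$. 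I would show that $\tfrac12(A_+(s)+A_-(t))\subset A_0\bigl(\tfrac{2st}{s+t}\bigr)$, or the equivalent statement in the original variables. Concretely: if $(y_1,a)\in K^\circ$ and $(y_2,-b)\in K^\circ$ with $a,b>0$, then
\[
\Bigl(\tfrac{b y_1+a y_2}{a+b},\ \tfrac{2ab}{a+b}\Bigr)\in (S_HK)^\circ .
\]
This follows by checking $\langle\cdot,(x,\tau)\rangle\le1$ for $(x,\tau)\in S_HK$, using that $\tau=(t-t')/2$ with $(x,t),(x,t')\in K$ and taking the convex combination of the two inequalities with weights $b/(a+b)$ and $a/(a+b)$. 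Symmetry of $S_HK$ under $s\mapsto -s$ gives the same for negative heights.

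Next I write the Gaussian measure of the half of the polar above $H$ as a one-dimensional integral over heights:
\[
\gamma_\sigma^n(K^\circ\cap H^+)=\int_0^\infty f(a)\,da,\qquad f(a)=\tfrac{1}{\sqrt{2\pi}\sigma}e^{-a^2/2\sigma^2}\,\gamma_\sigma^{n-1}\bigl(\{y:(y,a)\in K^\circ\}\bigr),
\]
and similarly $g$ for $H^-$ (with height $-b$) and $h$ for $(S_HK)^\circ\cap H^+$. I want to verify the hypothesis of Lemma~\ref{lem:Ball}, $h(\frac{2ab}{a+b})\ge f(a)^{b/(a+b)}g(b)^{a/(a+b)}$. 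The section inclusion above, combined with Pr\'ekopa--Leindler (log-concavity of $\gamma_\sigma^{n-1}$) with weights $\lambda=b/(a+b)$ and $1-\lambda$, handles the $(n-1)$-dimensional factor. For the one-dimensional Gaussian weight I need
\[
e^{-\frac{1}{2\sigma^2}\left(\frac{2ab}{a+b}\right)^2}\ge e^{-\frac{1}{2\sigma^2}\left(\lambda a^2+(1-\lambda)b^2\right)},
\]
that is, $\bigl(\tfrac{2ab}{a+b}\bigr)^2\le \tfrac{ba^2+ab^2}{a+b}=ab$. This is true by AM--GM: $\tfrac{2ab}{a+b}\le\sqrt{ab}$.

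This Gaussian-weight compatibility is the step I expect to be the crux. Unlike Lebesgue measure, there is no homogeneity to exploit, so one must check that the harmonic mean of the heights is dominated by the weighted quadratic mean, which AM--GM gives.

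Lemma~\ref{lem:Ball} then yields
\[
\gamma_\sigma^n\bigl((S_HK)^\circ\cap H^+\bigr)^{-1}\le\tfrac12\bigl(\gamma_\sigma^n(K^\circ\cap H^+)^{-1}+\gamma_\sigma^n(K^\circ\cap H^-)^{-1}\bigr).
\]
Medianity makes the right side equal to $\bigl(\tfrac12\gamma_\sigma^n(K^\circ)\bigr)^{-1}$. By symmetry of $(S_HK)^\circ$ about $H$ and since $H$ has measure zero, we get $\gamma_\sigma^n((S_HK)^\circ)\ge\gamma_\sigma^n(K^\circ)$. Degenerate cases (unbounded $K^\circ$, zero integrals) are covered by the convention in Lemma~\ref{lem:Ball}.
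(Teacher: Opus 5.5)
Your proposal is correct and follows the paper's proof essentially step for step: the Meyer--Pajor section inclusion, log-concavity of $\gamma_\sigma^{n-1}$ on $H$, Ball's lemma, and then the medial hypothesis together with the symmetry of $(S_HK)^\circ$. You also spell out two things the paper leaves implicit, namely the direct verification of the inclusion and the AM--GM check $\bigl(\frac{2ab}{a+b}\bigr)^2\le ab$ needed to carry the Gaussian weight through.

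One harmless slip: in your rescaled parametrization the inclusion actually reads $\frac12\bigl(A_+(s)+A_-(t)\bigr)\subset A_0\bigl(\frac{s+t}{2}\bigr)$, not $A_0\bigl(\frac{2st}{s+t}\bigr)$. Since you verify and use the correct statement in the original variables, nothing downstream is affected.
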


\begin{proof}
 For $t\in\R$, define
\[
        K^\circ(t)=\{x\in H:x+t\theta\in K^\circ\}.
\]
Meyer and Pajor \cite{MeyerPajorBlaschke} showed that, for any $s,t>0$,
\begin{equation}\label{eq:MP-inclusion}
        \frac{t}{s+t}K^\circ(s)+\frac{s}{s+t}K^\circ(-t)
        \subset
        (S_HK)^\circ\left(\frac{2st}{s+t}\right).
\end{equation}
By log-concavity of Gaussian measure on $H$, \eqref{eq:MP-inclusion} implies
\[
\begin{aligned}
\gamma_\sigma^{n-1}\left((S_HK)^\circ\left(\frac{2st}{s+t}\right)\right)        \ge
\gamma_\sigma^{n-1}(K^\circ(s))^{\frac{t}{s+t}}
\gamma_\sigma^{n-1}(K^\circ(-t))^{\frac{s}{s+t}}.
\end{aligned}
\]
Omitting the normalizing constant, for $s,t,u>0$, define 
\[
\begin{aligned}
        f(s)&=e^{-\frac{s^2}{2\sigma^2}}\gamma_\sigma^{n-1}(K^\circ(s)), \ \ 
        g(t)&=e^{-\frac{t^2}{2\sigma^2}}\gamma_\sigma^{n-1}(K^\circ(-t)), \ \ 
        h(u)&=e^{-\frac{u^2}{2\sigma^2}}\gamma_\sigma^{n-1}((S_HK)^\circ(u)).
\end{aligned}
\]
Therefore, for any $s,t>0$ we have
\[
        h\left(\frac{2st}{s+t}\right)
        \ge f(s)^{\frac{t}{s+t}}g(t)^{\frac{s}{s+t}},
        \qquad s,t>0.
\]
By Lemma \ref{lem:Ball}, this implies
\[
        \left(\int_0^\infty h(u)\dd u\right)^{-1}
        \le
        \frac12\left(\int_0^\infty f(s)\dd s\right)^{-1}
        +
        \frac12\left(\int_0^\infty g(t)\dd t\right)^{-1},
\]
which amounts to \[
\frac{1}{\gamma_\s^n((S_HK)^\circ\cap H^+)}\le\frac{1}{2}\left(\frac{1}{\gamma_\s^n(K^\circ\cap H^+)}+\frac{1}{\gamma_\s^n(K^\circ\cap H^-)}\right).
\]
We assumed that $\gamma_\s^n(K^\circ\cap H^+)=\gamma_\s^n(K^\circ\cap H^-)$. Moreover, since $S_HK$ is symmetric with respect to $H$ its polar is also symmetric with respect to $H$, and we get that $\gamma_\s^n((S_HK)^\circ \cap\, H^+)= \frac{1}{2}\gamma_\s^n((S_HK)^\circ)$. This gives the desired inequality.
\end{proof}

We will use the following lemma to construct a sequence of medial hyperplanes.

\begin{lemma}\label{lem:medial-containing-subspace}
Let $0\in L\subset\R^n$ be measurable and let $E\subset\R^n$ be a subspace of codimension two. Then there exists a hyperplane $H$ containing $E$ that is medial for $L$.
\end{lemma}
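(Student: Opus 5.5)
The plan is a continuity (intermediate value) argument on the circle of hyperplanes containing $E$. Let $E^\perp$ be the two-dimensional orthogonal complement of $E$. Every hyperplane containing $E$ has the form $H_\theta=\theta^\perp$ with $\theta\in S^{n-1}\cap E^\perp$. We parametrize $\theta(\phi)=\cos\phi\,e_1+\sin\phi\,e_2$ for an orthonormal basis $e_1,e_2$ of $E^\perp$, and define
\[
        \Phi(\phi)=\gamma_\sigma^n\bigl(L\cap\{x:\ip{x}{\theta(\phi)}>0\}\bigr)-\gamma_\sigma^n\bigl(L\cap\{x:\ip{x}{\theta(\phi)}<0\}\bigr).
\]
Replacing $\theta$ by $-\theta$ swaps the two open half-spaces, so $\Phi(\phi+\pi)=-\Phi(\phi)$. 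If $\Phi$ is continuous on $[0,\pi]$, then $\Phi(0)$ and $\Phi(\pi)$ have opposite signs (or both vanish). The intermediate value theorem then gives $\phi_0\in[0,\pi]$ with $\Phi(\phi_0)=0$, and $H=\theta(\phi_0)^\perp$ is medial for $L$ and contains $E$.

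The only real step is continuity of $\Phi$. Write $\Phi(\phi)=\int_L \operatorname{sgn}(\ip{x}{\theta(\phi)})\,d\gamma_\sigma^n(x)$, where $\operatorname{sgn}(0)=0$. As $\phi\to\phi_1$, the integrand converges pointwise to $\operatorname{sgn}(\ip{x}{\theta(\phi_1)})$ for every $x$ off the hyperplane $\theta(\phi_1)^\perp$. That hyperplane is Lebesgue-null, hence $\gamma_\sigma^n$-null. The integrand is bounded by $1$ and $\gamma_\sigma^n$ is a probability measure, so dominated convergence gives $\Phi(\phi)\to\Phi(\phi_1)$.

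Absolute continuity of $\gamma_\sigma^n$ is the key point, since it makes the hyperplanes themselves carry no mass. The hypothesis $0\in L$ plays no role here beyond the fact that it holds in the intended application to $L=K^\circ$. I expect no genuine obstacle, only care with the null hyperplane in the dominated convergence step.
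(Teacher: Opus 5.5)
Your proof is correct and matches the paper's argument: an intermediate value argument on the circle $S^{n-1}\cap E^\perp$, using the antisymmetry under $\theta\mapsto-\theta$. Using the signed difference $\Phi$ instead of the paper's normalized ratio $f(u)=\gamma_\sigma^n(L\cap H_+(u))/\gamma_\sigma^n(L)$ removes the need for the separate case $\gamma_\sigma^n(L)=0$, and your dominated convergence step supplies the continuity that the paper only asserts.
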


\begin{proof}
If $\gamma_\sigma^n(L)=0$, then any hyperplane containing $E$ is medial for $L$. Thus assume that $\gamma_\sigma^n(L)>0$.

Let $S=S^{n-1}\cap E^\perp$. Since ${\rm dim}(E^\perp)=2$, we have $S\cong S^1$ is a 1-dimensional sphere. For every $u\in S$ define $H(u) =u^\perp$, and note that $E\subset H(u)$. Let $H_+(u)=\{x\in \R^n:\ip{x}{u}>0\}$, and define the function
\[
        f(u)=\frac{\gamma_\sigma^n(L\cap H_+(u))}{\gamma_\sigma^n(L)}.
\]
Note that the function $f$ is continuous, and because Gaussian measure gives zero mass to hyperplanes, $f(-u)=1-f(u)$. The intermediate value theorem implies that there exists $u_0\in S$ with $f(u_0)=\frac{1}{2}$. Thus $H(u_0)$ is medial for $L$, as claimed.
\end{proof}

%We are now ready to prove our main theorem.

\begin{proof}[Proof of Theorem \ref{thm:revolution}]
We first show that, starting from any \(K\in\class\), one can construct, using Steiner symmetrizations, a set of revolution \(\widetilde K\in\class\) such that
\[
     G^n_\sigma(\widetilde K)\ge G^n_\sigma(K).
\]
It follows that if $K$ is a maximizer of \eqref{eq:Gaussian product}, then since the construction gives a sequence of sets with nondecreasing Gaussian volume products, we must have equality at every symmetrization step.  We then apply the equality statement in Lemma \ref{lem:gauss-steiner} to each step, in order to conclude that $K$ itself must be a set of revolution. 

Put \(K_0=K\). We construct orthonormal vectors
\(u_1,\ldots,u_{n-1}\) recursively. Choose any vector $u_1$ such that the hyperplane $u_1^\perp$ is medial for $K^\circ$. Let $K_1=S_{u_1^\perp}K$. Then Lemma \ref{lem:gauss-steiner} and Proposition \ref{prop:dual-steiner} give
\[
        \gamma_\sigma^n(K_1)\ge \gamma_\sigma^n(K),
        \qquad
        \gamma_\sigma^n((K_1)^\circ)
        \ge \gamma_\sigma^n(K^\circ).
\]
Assume we have found orthonormal vectors $u_1,\ldots,u_k$ for $1\le k\le n-2$ and we let $K_k=S_{u_k^\perp}(K_{k-1})$. 
Choose $u_{k+1}$ to be orthogonal to all $u_1, \ldots, u_k$ and such that $u_{k+1}^\perp$ is medial to $K_k^\circ$. This is possible since $E_k=\operatorname{span}(u_1,\ldots,u_k)$ has codimension at least $2$, so Lemma \ref{lem:medial-containing-subspace} gives a hyperplane $H_{k+1}=u_{k+1}^\perp$ containing $E_k$ that is medial for $K_k^\circ$.

Applying Lemma \ref{lem:gauss-steiner} and Proposition \ref{prop:dual-steiner} as above, we get $S_{H_{k+1}}K_k=K_{k+1}$ has Gaussian volume product at least as large as $K$ and all $K_i$ for $i=1,\ldots,k$. More generally, both $\gamma^n_\sigma(K_k)$ and $\gamma^n_{\sigma}(K_k^{\circ})$ are non-decreasing in $k$. This completes the induction.

After $n-1$ steps, the normals $u_1,\ldots,u_{n-1}$ span a hyperplane $E$. Let $e$ be a unit vector orthogonal to $E$. Without loss of generality, 
after applying a rotation, we may assume that
\(u_i=e_i\) for \(i=1,\ldots,n-1\), and that the common axis is
\(\mathbb Re_n\). Letting $\hat K=K_{n-1}$, we note that it is invariant with respect to reflections about $e_1^\perp, \ldots, e_{n-1}^\perp$. The same is true for the dual body, which has the same set of \((n-1)\) symmetries.

Since the restricted Gaussian density on the slice is even, every hyperplane containing the \(e_n\)-axis bisects the Gaussian measure of each fiber. Consequently, every hyperplane $H$ containing the $e_n$-axis is medial for $\hat K^\circ$.

This property (of having centrally symmetric sections in direction $e_n$) 
persists when we make any other Steiner symmetrization of $\hat K$ with respect to some $H$ which includes $e_n$, since the symmetrization works separately on these fibers. 

Next, choose a dense sequence \((u_j)_{j\ge n}\subset S^{n-1}\cap e_n^\perp\) (suitable for convergence of successive Steiner symmetrizations). Let $\widetilde{K}_{n} = \hat K$ and construct a sequence of bodies $\widetilde{K}_{j+1} = S_{u_{j+1}^\perp} \widetilde{K}_{j}$ for $j = n,n+1, \ldots$, such that 
both $\gamma^n_\sigma(\widetilde{K}_j)$ and $\gamma^n_{\sigma}(\widetilde{K}_j^{\circ})$ are again increasing in $j$.

By the standard convergence theorem for Steiner symmetrization in the local Hausdorff topology, the iterates converge to a convex
set \(\widetilde K\) whose sections perpendicular to \(e_n\) are Euclidean
balls (of various radii) centered on the \(e_n\)-axis. This means that $\widetilde{K}$ is a body of revolution.
Moreover,  $\gamma^n_\sigma(\widetilde{K})\ge \gamma^n_\sigma({K})$ and $\gamma^n_{\sigma}(\widetilde{K}^{\circ})\ge \gamma^n_{\sigma}(K^{\circ})$, completing the proof.
\end{proof}

\section{$C^\infty$ Regularity of maximizers and the Euler Lagrange equation}\label{sec:profiles}

We will use a relatively standard differentiation argument to prove that the maximizers have smooth support functions.
 Recall that we have already shown that for $n\ge 2$ every maximizer of \eqref{eq:Gaussian product} is a convex body of revolution which includes the origin in its interior. Hence, after a suitable rotation, we can write it as
\[
        K_f=\{(x,t)\in\R^{n-1}\times\R: |x|\le f(t)\},
\]
where $f$ is a non-negative concave function on its support interval $I = [-a, b]$ with $0<a,b$. 
Further, the polar of a set of revolution is again a set of revolution. A direct calculation gives that $K_f^\circ=K_g$, where
\begin{equation}\label{eq:T-transform}
        g(s)=Tf(s)=\inf_{\{t:f(t)>0\}}\frac{1-st}{f(t)},\qquad s\in I^\circ.
\end{equation}
Therefore, in higher dimensions, Question \ref{q:main} reduces to the one-dimensional profile problem
\begin{equation}\label{eq:reduced-F}
        G^n_\sigma(f)=\gamma_\sigma^n(K_f)\,\gamma_\sigma^n(K_{Tf}).
\end{equation}
We next show that the maximizers of $G^n_\sigma$ are smooth and satisfy a certain Euler-Lagrange equation. In what follows, we use the support function of \(K\) rather than its profile function, as it is more natural for Wulff perturbations and allows us to use the regularity theory for the Gaussian Minkowski problem.

\begin{proposition}\label{prop:regularity-maximizer}
Let \(n\ge 2\), let \(\sigma>0\), and let
\(K\subset\mathbb R^n\) be a maximizer of \(G_\sigma^n\).
Then $h_K\in C^\infty( S^{n-1})$ and \(K\) has everywhere positive Gauss curvature; equivalently, $ \nabla_S^2h_K+h_KI>0$ on $S^{n-1}$. Moreover, \(h_K\) satisfies \vspace{2mm}
\[
\begin{aligned}
    \gamma_\sigma^n(K^\circ)
    \exp\left(
        -\frac{h_K^2+|\nabla_Sh_K|^2}{2\sigma^2}
    \right)
    \det\left(\nabla_S^2h_K+h_KI\right)
    &=
    \gamma_\sigma^n(K)
    \exp\left(
        -\frac{1}{2\sigma^2h_K^2}
    \right)
    h_K^{-(n+1)}
\end{aligned}
\]
on \( S^{n-1}\).
\end{proposition}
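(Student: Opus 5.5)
We argue by a first-variation computation along Wulff-shape perturbations of $K$, followed by the regularity theory for the Gaussian Minkowski problem. By Proposition~\ref{lem:maximizer-is-body}, $K$ is a convex body with $0\in\operatorname{int}K$, so $h_K$ is positive and continuous on $S^{n-1}$. For $\phi\in C(S^{n-1})$ and $|\eps|$ small, we set $h_\eps=h_K+\eps\phi$ and let
\[
K_\eps=\{x:\ip{x}{u}\le h_\eps(u)\ \text{for all } u\in S^{n-1}\}
\]
be its Wulff shape. Aleksandrov's lemma, adapted to the Gaussian density as in the Gaussian Minkowski problem, gives
\[
\frac{d}{d\eps}\Big|_{\eps=0}\gamma_\sigma^n(K_\eps)=\frac{1}{(2\pi\sigma^2)^{n/2}}\int_{S^{n-1}}\phi(u)\,e^{-h_K(u)^2/(2\sigma^2)}\,dS_K'(u),
\]
where the weight comes from evaluating the density at $\nu_K^{-1}(u)$; in the smooth case $|x|^2=h_K^2+|\nabla_Sh_K|^2$ there. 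More precisely, $dS_K'$ is the surface area measure $S_K$ weighted by $e^{-|\nu_K^{-1}(u)|^2/2\sigma^2}e^{h_K^2/2\sigma^2}$, that is, the Gaussian surface area measure.

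For the polar factor, we use $K_\eps^\circ=\conv\{u/h_\eps(u)\}$, whose radial function is $\rho_{K_\eps^\circ}=1/h_\eps$ away from a null set. Hence
\[
\gamma_\sigma^n(K_\eps^\circ)=c\int_{S^{n-1}}\int_0^{1/h_\eps(u)}r^{n-1}e^{-r^2/2\sigma^2}\,dr\,du
\]
exactly, by the standard identity $h_{K_\eps}$-Wulff polar equals the radial body of $1/h_\eps$. Its derivative is
\[
-c\int\phi\, h_K^{-(n+1)}e^{-1/(2\sigma^2h_K^2)}\,du .
\]
Since $K$ is a maximizer, $\eps\mapsto G_\sigma^n(K_\eps)$ has a maximum at $0$ for every $\phi$. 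Both one-sided derivatives exist, so the derivative vanishes. Because $\phi$ is arbitrary, we obtain the measure equation
\[
\gamma_\sigma^n(K^\circ)\,e^{-|\nu_K^{-1}(u)|^2/2\sigma^2}\,dS_K(u)=\gamma_\sigma^n(K)\,e^{-1/(2\sigma^2h_K^2)}h_K^{-(n+1)}\,du
\]
in the weak (Aleksandrov) sense.

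The key point is that the right-hand side is a positive, continuous density with respect to spherical Lebesgue measure, bounded above and below since $h_K$ is bounded between positive constants. The left-hand side is $S_K$ times a weight bounded above and below, because $K$ is bounded. So $S_K$ has density bounded between two positive constants. Caffarelli's regularity theory for Monge--Amp\`ere then applies. The weak equation $\det(\nabla_S^2h+hI)=F(u,h,\nabla_S h)$ with $F$ bounded between positive constants gives strict convexity and $C^{1,\alpha}$ regularity of $h_K$, so $\nabla_Sh_K$ and hence $\nu_K^{-1}$ are well defined and Hölder. Consequently $F$ is $C^{\alpha}$. Caffarelli's interior $C^{2,\alpha}$ estimates, applied on the local graph/Monge--Amp\`ere representation of $h_K$, then give $h_K\in C^{2,\alpha}$ with $\nabla_S^2h_K+h_KI>0$. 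A standard bootstrap (Schauder theory for the linearized uniformly elliptic equation) yields $h_K\in C^\infty$. In the smooth setting, $dS_K=\det(\nabla_S^2h_K+h_KI)\,du$ and $|\nu_K^{-1}(u)|^2=h_K^2+|\nabla_Sh_K|^2$, which turns the measure equation into the stated PDE.

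We expect the main obstacle to be the regularity step, specifically the initial passage from a measure with bounded density to strict convexity and $C^{1,\alpha}$. Caffarelli's strict-convexity result requires the bounded-density condition to hold for the actual measure. Here the weight involves $\nu_K^{-1}$, which a priori is multivalued on a null set. We would handle this by noting that the weight is bounded from both sides regardless of how it is evaluated, so the density bounds for $S_K$ hold first. After that, $C^{1,\alpha}$ makes the equation's right-hand side genuinely Hölder, and the bootstrap can begin. A secondary, routine point is justifying the Gaussian Aleksandrov variational formula for merely continuous $\phi$, which follows the Lebesgue-measure proof with the density inserted.
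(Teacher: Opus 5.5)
There is a genuine gap in the first variation of the polar factor. You assert that $K_\eps^\circ=\overline{\conv}\{u/h_\eps(u)\}$ has radial function $1/h_\eps$ almost everywhere. From this you conclude that $\gamma_\sigma^n(K_\eps^\circ)=c\int_{S^{n-1}}\int_0^{1/h_\eps}r^{n-1}e^{-r^2/2\sigma^2}\,dr\,du$ exactly. This is false.

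The radial function of $K_\eps^\circ$ is $1/h_{K_\eps}$. For a Wulff shape one only has $h_{K_\eps}\le h_\eps$, with equality $S_{K_\eps}$-almost everywhere, not $du$-almost everywhere. At this stage $K$ is an arbitrary convex body and could a priori have vertices, so $h_K+\eps\phi$ is typically not a support function.

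A concrete example: take $K=[-1,1]^2$ and $\phi\equiv1$.
\begin{itemize}
\item For $\eps>0$ one gets $K_\eps=K+\eps B_2^2$.
\item For $\eps<0$ one gets $K_\eps=(1+\eps)K$, so $h_{K_\eps}=(1+\eps)h_K<h_K+\eps$ at every non-axis direction.
\end{itemize}
Consequently $\eps\mapsto\gamma_\sigma^2(K_\eps^\circ)$ is not even differentiable at $0$. Its left derivative is $-c\int_{S^1}h_K^{-2}e^{-1/(2\sigma^2h_K^2)}\,du$, while its right derivative is $-c\int_{S^1}h_K^{-3}e^{-1/(2\sigma^2h_K^2)}\,du$, and these differ since $h_K>1$ off the axes. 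So your derivative formula fails in general. The inference ``both one-sided derivatives exist, so the derivative vanishes'' is also invalid: a maximum at $0$ only gives left derivative $\ge0\ge$ right derivative.

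The missing idea, which is exactly how the paper argues, is to use only the inequality $h_{K_\eps}\le h_K+\eps\phi$. It gives a lower barrier, valid for both signs of $\eps$ and exact at $\eps=0$:
\[
\gamma_\sigma^n(K_\eps^\circ)\ge c\int_{S^{n-1}}\int_0^{1/(h_K+\eps\phi)}r^{n-1}e^{-r^2/2\sigma^2}\,dr\,du=\gamma_\sigma^n(K^\circ)-\eps\int_{S^{n-1}}\phi\,d\mu_h+o(\eps).
\]
Here $d\mu_h=c\,h_K^{-(n+1)}e^{-1/(2\sigma^2h_K^2)}\,du$. Combine this with the genuine Gaussian Aleksandrov formula for $\gamma_\sigma^n(K_\eps)$ and with $G_\sigma^n(K_\eps)\le G_\sigma^n(K)$. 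Letting $\eps\downarrow0$ gives $\gamma_\sigma^n(K^\circ)\int\phi\,dS_{\gamma_\sigma,K}\le\gamma_\sigma^n(K)\int\phi\,d\mu_h$, and letting $\eps\uparrow0$ gives the reverse inequality. This yields the weak Euler--Lagrange equation with no claim about differentiability of the polar factor.

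After that step, your regularity argument is fine and essentially the paper's route:
\begin{itemize}
\item The Gaussian weight is pinched on a bounded body, so $S_K$ has density pinched between positive constants.
\item This gives strict convexity and $C^{1,\alpha}$, hence a H\"older right-hand side, hence $C^{2,\alpha}$, and then a bootstrap.
\end{itemize}
The paper obtains the $C^{1,\beta}$ step from the $p=1$ Gaussian Minkowski regularity theorem. Your direct appeal to Caffarelli for the classical Minkowski problem with pinched density is equivalent.
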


\begin{proof}
By Proposition \ref{lem:maximizer-is-body}, \(K\) is a convex
body and \(0\in\operatorname{int}K\). Hence, writing
\(h=h_K\), there exist constants \(m,M>0\) such that
\[
    0<m\le h(u)\le M
    \qquad
    \text{for every }u\in S^{n-1}.
\]
Let $c_{n,\sigma}=(2\pi\sigma^2)^{-n/2}$ and consider the Gaussian surface area measure
\(S_{\gamma_\sigma,K}\) (see \cite{huang2021minkowski} where several of the computations are very similar to the ones we use below). It is characterised by
\begin{align}\label{eq:GaussSurfaceAreaMeasure}
     \int_{ S^{n-1}}\psi(u)\,
        dS_{\gamma_\sigma,K}(u)
    =
    c_{n,\sigma}
    \int_{\partial K}
        \psi(\nu_K(x))
        \exp\left(-\frac{|x|^2}{2\sigma^2}\right)
        d\mathcal H^{n-1}(x)
\end{align}
for every \(\psi\in C( S^{n-1})\). Here \(\mathcal H^{n-1}\) is the $(n-1)$-dimensional Hausdorff measure on $\R^{n}$, that is, the usual surface-area measure on \(\partial K\), and
\(\nu_K(x)\) denotes the outer unit normal, which is defined
for \(\mathcal H^{n-1}\)-almost every \(x\in\partial K\).

Let \(\varphi\in C(S^{n-1})\) be arbitrary. For
\(|t|\) sufficiently small, so that $h+t\varphi\ge \frac m2>0$, define the Wulff perturbation
\[
    K_t
    =
    [h+t\varphi]
    :=
    \bigcap_{u\in S^{n-1}}
    \left\{
       x\in\mathbb R^n:
       \langle x,u\rangle\le h(u)+t\varphi(u)
    \right\}.
\]
 The convex body \(K_t\) contains the origin
in its interior. By the first variation under Wulff
perturbations, by \cite[Theorem~3.3]{huang2021minkowski}, applied after
dilation to the standard Gaussian measure, we have
\begin{equation}
\label{eq:first-variation-K}
    \gamma_\sigma^n(K_t)
    =
    \gamma_\sigma^n(K) +  t\int_{ S^{n-1}}
        \varphi\,dS_{\gamma_\sigma,K}
    +o(t).
\end{equation}
We next obtain a one-sided first-order estimate for the polar. Since the support function of a Wulff shape satisfies $h_{K_t}\le h+t\varphi$,
we have
\[
    \rho_{K_t^\circ}(u)
    =
    \frac{1}{h_{K_t}(u)}
    \ge
    \frac{1}{h(u)+t\varphi(u)}.
\]
Therefore, using polar coordinates,
\[
\begin{aligned}
    \gamma_\sigma^n(K_t^\circ)
    &\ge
    c_{n,\sigma}
    \int_{ S^{n-1}}
    \int_0^{1/(h+t\varphi)}
        \exp\left(-\frac{r^2}{2\sigma^2}\right)
        r^{n-1}\,dr\,d\omega.
\end{aligned}
\]

Define, for \(s>0\), $F(s) = \int_0^{1/s}
        \exp\left(-\frac{r^2}{2\sigma^2}\right)
        r^{n-1}\,dr$, and since \(h\) is bounded above and below by positive constants and \(\varphi\) is bounded, we get 
\[
\begin{aligned}
    F(h+t\varphi)
    &=
    F(h)
    -
    t\varphi
    \exp\left(-\frac{1}{2\sigma^2h^2}\right)
    h^{-(n+1)}
    +o(t)
\end{aligned}
\]
uniformly on \( S^{n-1}\).

Define  $d\mu_h(u)=c_{n,\sigma}
    \exp\left(-\frac{1}{2\sigma^2h(u)^2}\right) h(u)^{-(n+1)}\,d\omega(u)$, and  note that the preceding expansion gives
\begin{equation}
\label{eq:first-variation-polar}
    \gamma_\sigma^n(K_t^\circ)
    \ge \gamma_\sigma^n(K^\circ)- t\int_{ S^{n-1}}\varphi\,d\mu_h
    +o(t).
\end{equation}

Since \(K\) is a maximizer, combining with the bounds
\eqref{eq:first-variation-K} and
\eqref{eq:first-variation-polar}, we obtain for all positive and negative \(t\) sufficiently small that
\[
    \bigl(\gamma_\sigma^n(K)+t\int_{ S^{n-1}} \varphi\,dS_{\gamma_\sigma,K}+o(t)\bigr)
    \bigl(\gamma_\sigma^n(K^\circ)-t\int_{ S^{n-1}}
        \varphi\,d\mu_h+o(t)\bigr)
    \le \gamma_\sigma^n(K)\gamma_\sigma^n(K^\circ).
\]
Expanding the product and letting \(t\downarrow0\) yields
\[
    \gamma_\sigma^n(K^\circ) \int_{ S^{n-1}}  \varphi\,dS_{\gamma_\sigma,K} \le \gamma_\sigma^n(K)\int_{ S^{n-1}}   \varphi\,d\mu_h,
\]
whereas letting \(t\uparrow0\) yields the reverse inequality.
Consequently,
\[
    \gamma_\sigma^n(K^\circ) \int_{S^{n-1}}  \varphi\,dS_{\gamma_\sigma,K} = \gamma_\sigma^n(K)\int_{ S^{n-1}}   \varphi\,d\mu_h,
\]
for every \(\varphi\in C( S^{n-1})\). Hence
\begin{equation}
\label{eq:weak-Euler}
      dS_{\gamma_\sigma,K}
    =
      \frac{\gamma_\sigma^n(K)}{\gamma_\sigma^n(K^\circ) }  c_{n,\sigma}
    \exp\left(-\frac{1}{2\sigma^2h^2}\right)
    h^{-(n+1)}\,d\omega.
\end{equation}

In particular, \(S_{\gamma_\sigma,K}\) is absolutely continuous with density 
bounded above and below by positive constants. We now apply the regularity theory for the Gaussian Minkowski problem. The \(p=1\) case of
\cite[Theorem 3.1]{feng2023lp} implies that
\(\partial K\) is \(C^1\) and strictly convex, and 
\[
    h\in C^{1,\beta}( S^{n-1})
\]
for every \(\beta\in(0,1)\). The theorem is stated for
the standard Gaussian measure, but the version with covariance
\(\sigma^2I\) follows by dilation.

In particular, the boundary point of $K$ with outer
normal \(u\) is given by
\[
        x(u)=h(u)u+\nabla_Sh(u)
\]
and depends Hölder-continuously on \(u\). Hence, the measure identity \eqref{eq:GaussSurfaceAreaMeasure} can now be rewritten as an ordinary surface area
equation:
\[
        dS_{K}(u)
        =
        \frac{\gamma_\sigma^n(K)}{\gamma_\sigma^n(K^\circ)} 
        \exp\left( \frac{|x(u)|^2}{2\sigma^2}\right)
        \exp\left(-\frac{h^{-2}(u)}{2\sigma^2} \right)
        h(u)^{-(n+1)}
        d\omega(u).
\]
Since
\[
        |x(u)|^2=h(u)^2+|\nabla_Sh(u)|^2,
\]
we obtain
\[
        dS_{K}(u) =
        \frac{\gamma_\sigma^n(K)}{\gamma_\sigma^n(K^\circ)}
        h(u)^{-(n+1)}
        \exp\left[
        \frac{1}{2\sigma^2}
        \left(
        h(u)^2+|\nabla_Sh(u)|^2-h(u)^{-2}
        \right)
        \right] d\omega (u).
\]

To show the improved regularity of $h$, we will use \cite[Theorem~4.5]{trudinger2008monge}, which summarizes classical results on existence, uniqueness and regularity for the Minkowski problem. 
Writing \(dS_K=f\,d\omega\), since \(h\in C^{1,\beta}\),  we get that
\(f\in C^{0,\beta}( S^{n-1})\) and, clearly, \(f>0\). Hence
\(\kappa:=1/f\) is also positive and belongs to
\(C^{0,\beta}( S^{n-1})\). Moreover,
\[
    \int_{ S^{n-1}}u\,\kappa(u)^{-1}\,d\omega(u)
    =
    \int_{ S^{n-1}}u\,dS_K(u)
    =0,
\]
where the last equality follows from the divergence theorem.
The Hölder regularity conclusion of
\cite[Theorem~4.5]{trudinger2008monge} therefore yields
\[
    h\in C^{2,\beta}( S^{n-1}).
\]
Thus the equation holds pointwise:
\[
        \det(\nabla_S^2h+hI)
        =
        \frac{\gamma_\sigma^n(K)}{\gamma_\sigma^n(K^\circ)}
        h^{-(n+1)}
        \exp\left[
        \frac{1}{2\sigma^2}
        \left(
        h^2+|\nabla_Sh|^2-h^{-2}
        \right)
        \right].
\]
The matrix \(\nabla_S^2h+hI\) is positive definite, because its
determinant is positive and \(K\) is strictly convex.
Hence the equation is uniformly elliptic on the compact sphere.

We improve the regularity of \(h\) by successive applications of
\cite[Lemma~17.16]{GilbargTrudinger}. Indeed, if
\(h\in C^{k+2,\beta}( S^{n-1})\) for some integer \(k\ge0\),
then the right-hand side of the preceding equation belongs to
\(C^{k+1,\beta}( S^{n-1})\). Using the established uniform ellipticity, the lemma applied in local coordinates gives
\[
    h\in C^{k+3,\beta}(S^{n-1}).
\]
Starting from \(h\in C^{2,\beta}( S^{n-1})\), induction
therefore yields \(h\in C^\infty( S^{n-1})\).

Finally, substituting the smooth formula for \(dS_{\gamma_\sigma,K}\) into \eqref{eq:weak-Euler} gives
\[
      {\gamma_\sigma^n(K^\circ)} e^{-(h^2+|\nabla_Sh|^2)/2\sigma^2}
        \det(\nabla_S^2h+hI)
        =
       {\gamma_\sigma^n(K)} e^{-1/(2\sigma^2h^2)}h^{-(n+1)}.
\]
This is the claimed Euler--Lagrange equation.
\end{proof}

\section{Analyzing Balls}

\subsection{Balls cannot be maximizers for $\sigma^2> {\frac{2}{n+1}}$}
\begin{proposition}
Let $n\ge 1$ and $\sigma^2>{\frac{2}{n+1}}$, then the Euclidean unit ball \(B_2^n\) is not a maximizer of $G^n_\sigma$. 
\end{proposition}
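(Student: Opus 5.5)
We exhibit an explicit one-parameter family through $B_2^n$ along which $G^n_\sigma$ strictly increases to second order. The first-order term vanishes, since the ball satisfies the Euler--Lagrange equation of Proposition \ref{prop:regularity-maximizer}, so the decision is made by the second variation. The natural candidate is the \emph{translation} direction, i.e. the first spherical harmonic $\varphi(u)=\ip{u}{e}$. Concretely, take $K_t=B_2^n+te$ with $e$ a fixed unit vector. Then $h_{K_t}(u)=1+t\ip{u}{e}$, and for $|t|<1$ the origin stays in the interior. The polar is then given exactly in polar coordinates by $\rho_{K_t^\circ}=1/(1+t\ip{u}{e})$. Both Gaussian measures are therefore explicit functions of $t$, and we only need to compare the $t^2$ coefficients. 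For $n=1$ this is just the family $[-1+t,1+t]$, consistent with Theorem \ref{thm:one-dimensional}.

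\textbf{Step 1: the body $K_t$.} Writing $\phi$ for the density of $\gamma_\sigma^n$, we have $\frac{d^2}{dt^2}\gamma_\sigma^n(B_2^n+te)\big|_{t=0}=\int_{B_2^n}\partial_e^2\phi$. By the divergence theorem this equals $\int_{S^{n-1}}\partial_e\phi(u)\,\ip{u}{e}\,d\omega=-\sigma^{-2}c_{n,\sigma}e^{-1/(2\sigma^2)}\int_{S^{n-1}}\ip{u}{e}^2d\omega$. Using $\int_{S^{n-1}}\ip{u}{e}^2d\omega=|S^{n-1}|/n$, the second derivative is $-\frac{c_{n,\sigma}e^{-1/(2\sigma^2)}|S^{n-1}|}{n\sigma^2}$. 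The first derivative vanishes by symmetry.

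\textbf{Step 2: the polar $K_t^\circ$.} With $F(s)=\int_0^{1/s}e^{-r^2/(2\sigma^2)}r^{n-1}dr$ as in the proof of Proposition \ref{prop:regularity-maximizer}, we have $\gamma_\sigma^n(K_t^\circ)=c_{n,\sigma}\int_{S^{n-1}}F(1+t\ip{u}{e})\,d\omega$. A direct computation gives $F'(s)=-e^{-1/(2\sigma^2s^2)}s^{-(n+1)}$ and hence $F''(1)=e^{-1/(2\sigma^2)}\bigl(n+1-\sigma^{-2}\bigr)$. The linear term integrates to zero, so
\[
\frac{d^2}{dt^2}\gamma_\sigma^n(K_t^\circ)\Big|_{t=0}=\frac{c_{n,\sigma}e^{-1/(2\sigma^2)}|S^{n-1}|}{n}\Bigl(n+1-\frac1{\sigma^2}\Bigr).
\]

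\textbf{Step 3: combining.} At $t=0$ both factors equal $\gamma_\sigma^n(B_2^n)$ and both first derivatives vanish. Hence the second derivative of the product is $\gamma_\sigma^n(B_2^n)$ times the sum of the two second derivatives above, namely
\[
\gamma_\sigma^n(B_2^n)\,\frac{c_{n,\sigma}e^{-1/(2\sigma^2)}|S^{n-1}|}{n}\Bigl(n+1-\frac{2}{\sigma^2}\Bigr).
\]
This is strictly positive exactly when $\sigma^2>\frac{2}{n+1}$. Therefore $G^n_\sigma(K_t)>G^n_\sigma(B_2^n)$ for small $t\neq0$, and the ball is not a maximizer.

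The main obstacle is keeping the second-order bookkeeping honest: justifying differentiation under the integral, which is harmless since everything is smooth for $|t|<1$, and checking that no mixed first-order terms survive. The genuinely delicate point is the sign computation of $F''(1)$, which is where the threshold $2/(n+1)$ emerges.
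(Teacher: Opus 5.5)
Your proposal is correct and follows the paper's proof essentially verbatim. You use the same translation family $B_2^n+te$, the same divergence-theorem computation for $\gamma_\sigma^n(K_t)$, and the same polar-coordinates expansion of $\gamma_\sigma^n(K_t^\circ)$ via $\rho_{K_t^\circ}=1/(1+t\ip{u}{e})$. The only difference is bookkeeping: you write $F$ as a function of the support-function value, so the chain rule is absorbed into $F''(1)=e^{-1/(2\sigma^2)}(n+1-\sigma^{-2})$, whereas the paper uses $F(r)=\int_0^r$ and expands $1/(1+\eps\ip{u}{e})$ to second order separately.
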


\begin{proof}
Fix \(e\in S^{n-1}\), and for \(|\eps|<1\) set
$K_\eps=B_2^n+\eps e$. 
Then \(0\in \operatorname{int}K_\eps\), and the support function of
\(K_\eps\) is $h_\eps(u)=1+\eps\langle u,e\rangle$. 
We expand the Gaussian measure of \(K_\eps\). Since
\[
        \gamma_\sigma^n(K_\eps)
        =
        c_{n,\sigma} \int_{B_2^n}
        e^{-|x+\eps e|^2/(2\sigma^2)}\,dx ,\quad \text{where} \ c_{n,\sigma}=(2\pi\sigma^2)^{-n/2},
\]
the first variation vanishes by symmetry. Moreover, by the divergence theorem, 
\[
\begin{aligned}
        \frac{d^2}{d\eps^2}
        \gamma_\sigma^n(B_2^n+\eps e)\bigg|_{\eps=0}
        &=
        c_{n,\sigma} \int_{B_2^n}\partial_{ee}
        \left(
        e^{-|x|^2/(2\sigma^2)}\right)\,dx  \\
        &=
        c_{n,\sigma}\int_{\partial B_2^n}
        \partial_e
        \left(
        e^{-|x|^2/(2\sigma^2)}\right)
        \langle u,e\rangle\,d\omega(u)  \\
        &=
        -c_{n,\sigma}\frac{e^{-1/(2\sigma^2)}}{\sigma^2}
        \int_{  S^{n-1}}\langle u,e\rangle^2\,d\omega(u).
\end{aligned}
\]
Hence
\[
        \gamma_\sigma^n(K_\eps)
        =
        \gamma_\sigma^n(B_2^n)
        -
        c_{n,\sigma}\frac{e^{-1/(2\sigma^2)}}{\sigma^2}
        \int_{  S^{n-1}}\langle u,e\rangle^2\,d\omega(u)\frac{\eps^2}{2}
        +
        o(\eps^2).
\]
For the dual, consider 
\[
        \rho_{K_\eps^\circ}(u)=\frac1{h_\eps(u)}
        =
        \frac1{1+\eps\langle u,e\rangle}.
\]
Denote
\[
        F(r)
        =
        c_{n,\sigma}
        \int_0^r e^{-s^2/(2\sigma^2)}s^{n-1}\,ds  
\]
and check that 
\[
        F'(1)=c_{n,\sigma}e^{-1/(2\sigma^2)},
        \qquad
        F''(1)=c_{n,\sigma}e^{-1/(2\sigma^2)}\left(n-1-\frac1{\sigma^2}\right). 
\]

We write 
\[
        \gamma_\sigma^n(K_\eps^\circ)
        =
        \int_{ S^{n-1}}
        F\left(\frac1{1+\eps\langle u,e\rangle}\right)
        d\omega(u),\]
and since  $\frac1{1+\eps\langle u,e\rangle}
        =
        1-\eps\langle u,e\rangle
        +\eps^2\langle u,e\rangle^2
        +
        o(\eps^2)$, we get, using
\(\int_{S^{n-1}}\langle u,e\rangle\,d\omega(u)=0\), that
\[
\begin{aligned}
        \gamma_\sigma^n(K_\eps^\circ)
        &=
        \gamma_\sigma^n(B_2^n)
        +
        c_{n,\sigma}e^{-1/(2\sigma^2)} \left(\int_{  S^{n-1}}\langle u,e\rangle^2 d\omega(u)\right)\eps^2
        \\&+
      c_{n,\sigma}\frac{e^{-1/(2\sigma^2)}}{2} 
        \left(n-1-\frac1{\sigma^2}\right)\left(\int_{  S^{n-1}}\langle u,e\rangle^2\,d\omega(u)\right)\eps^2
        +
        o(\eps^2)  \\
        &=
        \gamma_\sigma^n(B_2^n)
        +
        c_{n,\sigma}\frac{e^{-1/(2\sigma^2)}}{2}
        \left(n+1-\frac1{\sigma^2}\right)\left(\int_{ S^{n-1}}\langle u,e\rangle^2\,d\omega(u)\right)\eps^2
        +
        o(\eps^2).
\end{aligned}
\]

Multiplying the two expansions gives
\[
\begin{aligned}
        \gamma_\sigma^n(K_\eps)
        \gamma_\sigma^n(K_\eps^\circ)
        &=
        (\gamma_\sigma^n(B_2^n))^2
        +
        \gamma_\sigma^n(B_2^n) c_{n,\sigma}\frac{ e^{-1/(2\sigma^2)}}{2}
        \left(n+1-\frac2{\sigma^2}\right)
        \left(\int_{  S^{n-1}}\langle u,e\rangle^2 d\omega(u)\right)\eps^2
        +
        o(\eps^2).
\end{aligned}
\]
We see that when $\sigma^2>\frac{2}{n+1}$, for all sufficiently small nonzero \(\eps\) we get
\[
        \gamma_\sigma^n(K_\eps)\gamma_\sigma^n(K_\eps^\circ)
        >
        \gamma_\sigma^n(B_2^n)^2 .
\]
Thus \(B_2^n\) is not a maximizer.
\end{proof}

\subsection{In the plane: disks are maximizers up to $\sigma^2 = \frac{2}{3}$}

In this section and the next we use the Euler--Lagrange equation derived in Proposition~\ref{prop:regularity-maximizer}: for a maximizer \(K\) with support function \(h\) 
\begin{equation}\label{EL}
 \det(\nabla_S^2 h+hI)=
        C h^{-(n+1)}
        \exp\left[
        \frac{1}{2\sigma^2}
        \left(h^2+|\nabla_Sh|^2-h^{-2}\right)
        \right],
\end{equation}
where
\[
        C=\frac{\gamma_\sigma^n(K)}{\gamma_\sigma^n(K^\circ)}>0.
\]

\begin{proposition}
\label{prop:planar}
Let \(n=2\). If
$ \sigma^2\le \frac23,
      $
then the unique maximizer of $G^n_\sigma$
is the Euclidean unit disk \(B_2^2\).
\end{proposition}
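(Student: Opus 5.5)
Let $K$ be a maximizer; one exists by Proposition~\ref{prop:existence}. By Theorem~\ref{thm:revolution} and Proposition~\ref{prop:regularity-maximizer}, $K$ is smooth with positive curvature and contains $0$ in its interior. In the plane, "body of revolution" means $K$ is symmetric with respect to a line, say the $x_2$-axis. Write $h(\theta)$ for its support function on $S^1$. The Euler--Lagrange equation \eqref{EL} then becomes the ODE
\[
h''+h=C\,h^{-3}\exp\Bigl[\tfrac{1}{2\sigma^2}\bigl(h^2+h'^2-h^{-2}\bigr)\Bigr],
\]
with $h$ $2\pi$-periodic and even about $\theta=\pi/2$.

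The plan is to show that for $\sigma^2\le 2/3$ the only positive periodic solutions are constants. The constant must then satisfy $1=Ch^{-4}\exp[(h^2-h^{-2})/2\sigma^2]$, and it must equal $1$: on the family of centered balls $rB_2^2$, Theorem~\ref{thm:symmetric} and its uniqueness (as in the one-dimensional argument) single out $r=1$. Alternatively, for $h\equiv r$ we have $C=\gamma(rB)/\gamma(r^{-1}B)$, and solving directly gives $r=1$.

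To rule out nonconstant solutions, I would look for a first integral or a monotone quantity. Multiply the ODE by $h'$. Writing $E=\tfrac12(h^2+h'^2)$, we have $E'=h'(h''+h)=C h' h^{-3}e^{(2E-h^{-2})/2\sigma^2}$, so
\[
\frac{d}{d\theta}\bigl(-\sigma^2 e^{-E/\sigma^2}\bigr)=C\,h' h^{-3}e^{-h^{-2}/(2\sigma^2)}=\frac{d}{d\theta}\Phi(h),
\]
where $\Phi'(h)=C h^{-3}e^{-1/(2\sigma^2h^2)}$. This gives a conserved quantity
\[
\sigma^2 e^{-E/\sigma^2}+\Phi(h)=\text{const},
\]
so the ODE is a conservative planar system in $(h,h')$. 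Periodic solutions are closed orbits around the unique equilibrium $h_0$. The question reduces to comparing their period with $2\pi$: a nonconstant solution on $S^1$ must have period $2\pi/k$ for an integer $k\ge1$. The case $k=1$ is excluded because $k=1$ modes correspond to translations, and as in the not-a-ball computation the translation must still yield an honest solution. Actually $k=1$ solutions do exist for the Minkowski-type problem only trivially, so I would show that the period function $T(\text{energy})$ lies strictly between $2\pi/2=\pi$ and $2\pi$, or more precisely is strictly greater than $\pi$, so that no $k\ge2$ occurs. The case $k=1$ must be ruled out separately, for instance using the symmetry and the divergence identity $\int u\,dS_K=0$.

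The linearization at $h\equiv1$ gives $\phi''+\phi=\lambda\phi$ with $\lambda=4-\frac{1}{\sigma^2}\cdot(1+1)=4-\tfrac{2}{\sigma^2}$, after accounting for the derivative of the right-hand side at $h=1$. The small-amplitude period is therefore $2\pi/\sqrt{1-\lambda}=2\pi/\sqrt{2/\sigma^2-3}$. The condition $\sigma^2\le2/3$ means $2/\sigma^2-3\ge 0$, which matches the threshold in the statement up to normalization; I would recheck the computation, since with $\sigma^2=2/3$ this is exactly where the $k=1$/$k=2$ resonance appears. The main obstacle is the global step: proving monotonicity of the period function over all energy levels, not just near the equilibrium, so that it never reaches $2\pi/k$ for the forbidden values of $k$. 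I would attempt this with a Chicone-type criterion applied to the potential obtained after the change of variables $w=e^{-E/\sigma^2}$. If that fails, I would instead try a maximum-principle argument: compare the values at the maximum and minimum of $h$, using $h'=0$ and the sign of $h''$ there together with the conserved quantity, to force $\max h=\min h$ when $\sigma^2\le 2/3$. Finally, uniqueness follows because every maximizer must be the constant solution $h\equiv1$.
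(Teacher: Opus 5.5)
Your first integral is correct (one checks directly that $\frac{d}{d\theta}\bigl[\sigma^2e^{-E/\sigma^2}+\Phi(h)\bigr]=0$), and reducing the statement to excluding nonconstant $2\pi$-periodic orbits is legitimate. But there is a genuine gap. The decisive step is a lower bound on the period of \emph{every} closed orbit, for every value of $C$, and you leave exactly that open. The plan around it also aims at the wrong target.

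\textbf{The linearization.} At an equilibrium $h_0$, the $h$-derivative of the logarithm of the right-hand side is $-3/h_0+\sigma^{-2}(h_0+h_0^{-3})$. So the linearized equation is $\phi''+\bigl(4-\sigma^{-2}(h_0^2+h_0^{-2})\bigr)\phi=0$, which at $h_0=1$ reads $\phi''+(4-2/\sigma^2)\phi=0$. Hence at $\sigma^2=2/3$ small oscillations have period exactly $2\pi$. The threshold is where the $k=1$ mode becomes neutral; this is the same computation as the paper's perturbation $B_2^n+\eps e$, with factor $n+1-2/\sigma^2$.

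\textbf{Why $k=1$ is the whole difficulty.} Excluding $k\ge2$ is the easy part. The case $k=1$ cannot be disposed of by the reflection symmetry or by the closure identity. The closure identity holds for every $2\pi$-periodic $h$, since $\int_0^{2\pi}(h+h'')(\cos\theta,\sin\theta)\,d\theta=0$ after two integrations by parts. Nor are $k=1$ solutions translates of anything, because the Gaussian weight is not translation invariant. Indeed, the $k=1$ perturbation $B_2^2+\eps e$ is what beats the disk once $\sigma^2>2/3$.

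\textbf{Non-uniqueness of the equilibrium.} The equilibrium is not unique when $1/2<\sigma^2\le2/3$, because $r\mapsto r^4e^{-(r^2-r^{-2})/(2\sigma^2)}$ is then not monotone.

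\textbf{The fallback.} Comparing the equation at the maximum and minimum of $h$ is the paper's argument for $\sigma^2\le1/n$. In the plane it requires that function to be monotone, i.e.\ $\sigma^2\le1/2$. For $1/2<\sigma^2\le2/3$ and $C=1$, the equilibrium $h_0=1$ is a center surrounded by closed orbits of other periods. Information at the extrema of $h$ alone cannot distinguish a $2\pi$-periodic solution from these.

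\textbf{The missing idea.} The paper uses a Sturm-type estimate that bounds the half-period directly, uniformly in the amplitude and in $C$. Log-differentiating the equation eliminates $C$ and gives, with $u=h'$ and $\rho=h+h''$, the linear equation $u''+\bigl(1-\rho\bigl[\sigma^{-2}(\rho+h^{-3})-3h^{-1}\bigr]\bigr)u=0$. On a nodal interval $I$ of $u$, multiply by $u$ and integrate. Then integrate by parts the terms linear in $u'$, using $h'=u$, $\rho=h+u'$ and $u=0$ on $\partial I$. This yields $\int_I(u')^2=\int_Iu^2-J_I$ with $J_I=\sigma^{-2}\int_Iu^2(u'+h-h^{-1})^2+(2\sigma^{-2}-3)\int_Iu^2+\int_I\bigl[\sigma^{-2}h^{-4}+\tfrac13(2\sigma^{-2}-3)h^{-2}\bigr]u^4$. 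This is strictly positive when $\sigma^2\le2/3$.

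Wirtinger's inequality then forces $|I|>\pi$. Since $u=h'$ has zero mean, it has at least two disjoint nodal arcs on a circle of length $2\pi$, a contradiction. In phase-plane language, every closed orbit has period strictly greater than $2\pi$ (not merely greater than $\pi$). That is precisely the global bound your approach needed.
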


\begin{proof}
Let \(K\) be such a maximizer, in particular it is smooth and strictly convex and has a reflection symmetry. Let \(h\) be its support function
on \(  S^1\). We write derivatives with respect to the angular
variable \(\theta\), and set
\[
        \rho=h+h''.
\]
Since \(K\) is smooth and strictly convex, \(\rho>0\). In dimension two, \eqref{EL} becomes
\[
        \rho
        =
        C h^{-3}
        \exp\left[
        \frac{1}{2\sigma^2}
        \left(h^2+(h')^2-h^{-2}\right)
        \right].
\]

We show that \(h\) must be constant. Put
\[
        u=h'.
\]
Differentiating the logarithm  and using that $ \rho'=h'+h'''=u+u''$
we obtain
\[
        \frac{\rho'}{\rho}
        =
        u\left[
        \frac{1}{\sigma^2}(\rho+h^{-3})-\frac3h
        \right].
\]
Thus
\[
        u''
        +
        \left(
        1-\rho\left[
        \frac{1}{\sigma^2}(\rho+h^{-3})-\frac3h
        \right]
        \right)u
        =
        0.
\]

Assume towards a contradiction that \(h\) is not constant, namely  \(u\not\equiv0\).
Let \(I=(a,b)\) be a connected component of \(\{u\ne0\}\) so that 
$ u(a)=u(b)=0$ and $u$ is of constant sign on $I$. 
Multiplying the last identity by \(u\) and integrating
over \(I\), we get
\[
        \int_I (u')^2\,d\theta
        =
        \int_I u^2\,d\theta
        -
        \int_I
        \rho\left[
        \frac{1}{\sigma^2}(\rho+h^{-3})-\frac3h
        \right]u^2\,d\theta .
\]

We claim that the second term on the right, which we denote \(J_I\), is positive whenever \(\sigma^2\le 2/3\). Indeed, since
$\rho=h+u'$,
we have
\[
\begin{aligned}
J_I
={}&
\int_I
(h+u')
\left[
\frac{1}{\sigma^2}(h+u'+h^{-3})-\frac3h
\right]u^2\,d\theta .
\end{aligned}
\]
Expanding and integrating by parts the terms containing \(u'\), using
\(u(a)=u(b)=0\), gives
\[
\begin{aligned}
J_I
={}&
\frac{1}{\sigma^2}\int_I
u^2\left(u'+h-h^{-1}\right)^2\,d\theta  
+
(\frac{2}{\sigma^2}-3)\int_Iu^2\,d\theta  +
\int_I
\left[
\frac{h^{-4}}{\sigma^2}  
+
\frac{2/\sigma^2-3}{3}h^{-2}
\right]u^4\,d\theta .
\end{aligned}
\]

If \(\sigma^2\le 2/3\), all terms in the expression for $J_I$ are non-negative, and the last
term is strictly positive because \(u\not\equiv0\) on \(I\), showing that  $J_I>0$. 

Thus we have shown 
\[
        \int_I (u')^2\,d\theta
        <
        \int_I u^2\,d\theta.
\]
On the other hand, the  Poincar\'{e} inequality on  
\(I=(a,b)\) gives
\[
        \int_I (u')^2\,d\theta
        \ge
        \frac{\pi^2}{(b-a)^2}
        \int_I u^2\,d\theta .
\]
Combining the two we get that 
$  b-a>\pi$. 

Thus every connected component of \(\{u\ne0\}\) has length strictly larger
than \(\pi\). This is impossible since \(u=h'\) is periodic and
has vanishing integral on \( S^1\).
Therefore \(u\equiv0\), and hence \(h\) is constant. Thus \(K=rB_2^2\)
for some \(r>0\). Among balls $rB_2^2$, the Gaussian product
is uniquely maximized at \(r=1\) (this follows from Theorem \ref{thm:symmetric} but is also a simple consequence of log concavity). Hence \(K=B_2^2\).
\end{proof}

\subsection{In dimension $n\ge3$: balls must be maximizers for $\sigma^2\le 1/n$}

\begin{proposition}
\label{prop:general-rigidity-monotone-range}
Let \(n\ge2\). If $
        \sigma^2\le \frac1n$
then the unique maximizer of $G^n_\sigma$ 
 is the
Euclidean unit ball \(B_2^n\).
\end{proposition}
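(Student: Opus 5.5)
The plan is a maximum-principle argument applied to the Euler--Lagrange equation \eqref{EL} from Proposition~\ref{prop:regularity-maximizer}. I do not expect to need the reflection or revolution structure.

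Let $K$ be a maximizer; it exists by Proposition~\ref{prop:existence}. By Proposition~\ref{prop:regularity-maximizer} its support function $h$ is $C^\infty$ on $S^{n-1}$, positive, and satisfies \eqref{EL} with $C=\gamma_\sigma^n(K)/\gamma_\sigma^n(K^\circ)>0$. Taking logarithms and rearranging, \eqref{EL} reads
\[
\det(\nabla_S^2h+hI)\,h^{-(n-1)}=\frac{C}{\varphi(h)}\exp\left(\frac{|\nabla_Sh|^2}{2\sigma^2}\right),
\qquad
\varphi(r):=r^{2n}\exp\left(-\frac{r^2-r^{-2}}{2\sigma^2}\right).
\]

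\textbf{Evaluation at the extrema.} Let $u_{\max}$ be a point where $h$ attains its maximum $h_{\max}$. There $\nabla_Sh=0$ and $\nabla_S^2h\le 0$. Since $\nabla_S^2h+hI$ is positive definite, all of its eigenvalues lie in $(0,h_{\max}]$, so
\[
\det(\nabla_S^2h+hI)\le h_{\max}^{n-1}.
\]
Plugging this into the rewritten equation gives $C\le \varphi(h_{\max})$.

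Symmetrically, at a minimum point of $h$ we have $\nabla_Sh=0$ and $\nabla_S^2h\ge0$, hence $\det(\nabla_S^2h+hI)\ge h_{\min}^{n-1}$. This gives $C\ge\varphi(h_{\min})$. Combining the two,
\[
\varphi(h_{\max})\ge C\ge\varphi(h_{\min}).
\]

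\textbf{Monotonicity of $\varphi$.} This is the only place the hypothesis $\sigma^2\le 1/n$ enters. We have
\[
(\log\varphi)'(r)=\frac{2n}{r}-\frac{r+r^{-3}}{\sigma^2}=\frac{1}{r\sigma^2}\bigl(2n\sigma^2-(r^2+r^{-2})\bigr).
\]
Since $r^2+r^{-2}\ge2$, with equality only at $r=1$, and $2n\sigma^2\le 2$, the derivative is $\le0$ and vanishes at most at the single point $r=1$. Hence $\varphi$ is strictly decreasing on $(0,\infty)$.

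It follows that $\varphi(h_{\max})\ge\varphi(h_{\min})$ forces $h_{\max}\le h_{\min}$. So $h$ is constant and $K=rB_2^n$ for some $r>0$.

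\textbf{Identifying the radius.} Among dilates of the ball, the product $\gamma_\sigma^n(rB_2^n)\gamma_\sigma^n(r^{-1}B_2^n)$ is uniquely maximized at $r=1$. This follows from Theorem~\ref{thm:symmetric}, or from log-concavity of $t\mapsto\gamma_\sigma^n(e^tB_2^n)$ together with the evenness of the product in $t$. Therefore $K=B_2^n$, and since every maximizer is of this form, $B_2^n$ is the unique maximizer.

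\textbf{Main obstacle.} The delicate step is keeping the inequality directions right at the extrema. This relies on the positive definiteness of $\nabla_S^2h+hI$ supplied by Proposition~\ref{prop:regularity-maximizer}, which is what lets us bound the determinant by the product of the diagonal terms $h_{\max}$ (respectively $h_{\min}$).
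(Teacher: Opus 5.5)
Your proposal is correct and follows the paper's proof essentially step for step. You evaluate \eqref{EL} at the maximum and minimum of $h$ to get $\varphi(h_{\min})\le C\le\varphi(h_{\max})$, which is the paper's $F_\sigma(m)\le C\le F_\sigma(M)$. You use $r^2+r^{-2}\ge 2$ to see that $\varphi$ is strictly decreasing when $\sigma^2\le 1/n$, which forces $h$ to be constant, and then fix the radius $r=1$ among dilates of the ball; your remark that $(\log\varphi)'$ vanishes only at $r=1$ is a slightly more explicit justification of strict monotonicity than the paper gives.
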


\begin{proof}
Let \(K\) be such a maximizer, and let \(h=h_K\) be its support function.
Set
\[
        M=\max_{S^{n-1}}h,
        \qquad
        m=\min_{S^{n-1}}h.
\]
At a maximum point \(u_M\) of \(h\), we have
\[
        \nabla_Sh(u_M)=0,
        \qquad
        \nabla_S^2h(u_M)\le0.
\]
Since \(h(u_M)=M\), this gives
\[
        0<\nabla_S^2h(u_M)+MI\le MI.
\]
Therefore, $\det(\nabla_S^2h(u_M)+MI)\le M^{n-1}$.
Evaluating \eqref{EL} at \(u_M\), we get
\[
        C M^{-(n+1)}
        \exp\left[
        \frac{1}{2\sigma^2}(M^2-M^{-2})
        \right]
        \le
        M^{n-1}.
\]
Equivalently,
\[
        C
        \le
        M^{2n}
        \exp\left[
        -\frac{1}{2\sigma^2}(M^2-M^{-2})
        \right].
\]

Similarly, at a minimum point \(u_m\) of \(h\),
$\nabla_Sh(u_m)=0$, and
$ \nabla_S^2h(u_m)\ge0$. Thus
\[
        \nabla_S^2h(u_m)+mI\ge mI,
\]
and so $\det(\nabla_S^2h(u_m)+mI)\ge m^{n-1}$.
Evaluating \eqref{EL} at \(u_m\), we obtain
\[
        C m^{-(n+1)}
        \exp\left[
        \frac{1}{2\sigma^2}(m^2-m^{-2})
        \right]
        \ge
        m^{n-1}.
\]
Hence, $C \ge m^{2n} \exp\left[ -\frac{1}{2\sigma^2}(m^2-m^{-2})\right]$.

Define
\[
        F_\sigma(r)
        =
        r^{2n}
        \exp\left[
        -\frac{1}{2\sigma^2}(r^2-r^{-2})
        \right],
        \qquad r>0.
\]
The upper and lower bounds for $C$ are thus 
\[
        F_\sigma(m)\le C\le F_\sigma(M).
\]
Using the assumption \(\sigma^2\le 1/n\), we see
\[
        \frac{d}{dr}\log F_\sigma(r)
        =
        \frac{2n}{r}
        -
    \frac{1}{\sigma^2}(r+r^{-3})\le \frac{2n}{r}
        -
        \frac{2}{\sigma^2r}
        \le0.
\]

Hence \(F_\sigma\) is non-increasing on \((0,\infty)\), and in fact is decreasing. Since \(m\le M\), this gives
\[
        F_\sigma(m)\ge F_\sigma(M).
\]
However, the upper and lower bounds for $C$ implied the opposite inequality and this means we must have equality throughout, that is, 
$
        F_\sigma(m)=F_\sigma(M)$. But 
\(F_\sigma\) is decreasing, so this means $
        m=M$.
Therefore \(h\) is constant, and \(K=rB_2^n\) for some \(r>0\). As in the two dimensional case, for Euclidean balls $G^n_\sigma$
is uniquely maximized at \(r=1\), resulting in the desired consequence,  \(K=B_2^n\).
\end{proof}

\begin{remark}
The gap between \(\frac1n\) and \(\frac2{n+1}\) is precisely the range where the
one-dimensional monotonicity of \(F_\sigma\) fails, and where for the moment we are unable to show that the Euclidean unit ball is a maximizer. 
\end{remark}

\bibliographystyle{plain}

\bibliography{ref}

\end{document}